\newtheorem{theorem}{Theorem}
\newtheorem{lemma}[theorem]{Lemma}
\newtheorem{corollary}[theorem]{Corollary}
\newtheorem{proposition}[theorem]{Proposition}
\theoremstyle{definition}
\newtheorem{definition}{Definition}
\newtheorem*{pseudodef}{Pseudodefinition}
\newtheorem{example}{Example}
\newtheorem*{remark}{Remark}
\newcommand{\cC}{\mathcal{C}}
\newcommand{\cE}{\mathcal{E}}
\newcommand{\cH}{\mathcal{H}}
\newcommand{\cL}{\mathcal{L}}
\newcommand{\cM}{\mathcal{M}}
\newcommand{\cT}{\mathcal{T}}
\renewcommand{\d}{\mathrm{d}}
\newcommand{\D}{\mathrm{D}}
\newcommand{\der}[2]{\frac{\partial #1}{\partial #2}}
\newcommand{\disc}{\mathrm{disc}}
\newcommand{\inner}[2]{\left\langle #1 \,, #2 \right\rangle}
\newcommand{\binner}[2]{\big\langle #1 \,, #2 \big\rangle}
\newcommand{\mesh}{\mathrm{mesh}}
\renewcommand{\mod}{\mathrm{mod}}
\renewcommand{\O}{\mathcal{O}}
\newcommand{\R}{\mathbb{R}}
\title{
Modified Equations for \\ Variational Integrators}
\author{Mats Vermeeren}
\date{\normalsize \textit{Institut f\"ur Mathematik, MA 7-2, Technische Universit\"at Berlin, \\
Str.\@ des 17.\@ Juni 136, 10623 Berlin, Germany \\
E-mail:} \url{vermeeren@math.tu-berlin.de}}
\begin{document}

\maketitle
\thispagestyle{empty}
\enlargethispage{1cm}

\abslabeldelim{.}
\setlength{\abstitleskip}{-\absparindent}
\begin{abstract}
It is well-known that if a symplectic integrator is applied to a Hamiltonian system, then the modified equation, whose solutions interpolate the numerical solutions, is again Hamiltonian. We investigate this property from the variational side. We present a technique to construct a Lagrangian for the modified equation from the discrete Lagrangian of a variational integrator.
\end{abstract}

\section{Introduction}

The key to explaining the long-time behavior of symplectic integrators is backward error analysis, the study of the modified equation whose solutions  interpolate the discrete solutions. It is a well-known and essential fact that if a symplectic integrator is applied to a Hamiltonian equation, then the resulting modified equation is Hamiltonian as well. This strongly suggest that when a variational integrator is applied to a Lagrangian system, the resulting modified equation is Lagrangian as well. In this paper we investigate whether that is indeed the case. 

We will introduce a method to construct modified Lagrangians directly from the discrete Lagrangian of the variational integrator. These modified Lagrangians produce the modified equation up to an error of arbitrarily high order in the step size. Our method is similar to an approach taken by \citet*{oliver2012new}, who discuss the analogous problem for a variational semi-discretization of the semi-linear wave equation.

First we briefly review the essentials of variational integrators. Then, in Section \ref{sec-modeqn}, we discuss the concept of modified equations for first and second order difference equations. In Section \ref{sec-modlag} we present the construction of a Lagrangian for the modified equation. It consists of many steps and requires the introduction of some analytic concepts. The most important of those are \emph{meshed variational problems}, where an extremizer is sought in a class of curves that may be nondifferentiable at some points, and \emph{$k$-critical families of curves}, which are families of curves that almost extremize the action. Finally, in Section \ref{sec-ex}, we clarify our approach with some examples.

\section{Variational integrators}

In this section we give a concise introduction to variational integrators, inspired on \citet*[Section VI.6]{hairer2006geometric}. For a detailed overview of the concept and an extensive bibliography, we refer to \citet*{marsden2001discrete}.

A continuous \emph{Lagrangian} or \emph{variational} system on the Euclidean space $\R^N$ is described by a smooth function $\cL: T\R^N \cong \R^N \times \R^N \rightarrow \R$ and the corresponding \emph{action integral}
\begin{equation}\label{cont-act}
 S(x) := \int_a^b \cL(x(t),\dot{x}(t)) \,\d t.
\end{equation}
A smooth curve $x:[a,b] \rightarrow \R^N: t \mapsto (x_1(t), \ldots, x_N(t))$ is a solution of the system if and only if it is a critical point of the action $S$ in the set of all smooth curves with the same endpoints $x(a)$ and $x(b)$. Formally, this condition can be written as
\begin{align}
 0 = \delta S(x) = \int_a^b \delta \cL(x(t),\dot{x}(t)) \,\d t
&= \int_a^b \sum_{i=1}^N \left( \der{\cL}{x_i} \delta x_i + \der{\cL}{\dot{x}_i} \delta \dot{x}_i \right) \d t \notag \\
&= \int_a^b \sum_{i=1}^N \left( \der{\cL}{x_i} - \frac{\d}{\d t} \der{\cL}{\dot{x}_i} \right) \delta x_i \,\d t. \label{cont-varact}
\end{align}
When integrating by parts to obtain the last equality we could ignore the boundary term because the boundary values of the curve are fixed, hence $\delta x(a) = \delta x(b) = 0$. Since Equation \eqref{cont-varact} holds for any such variation $\delta x$, the criticality of the action is characterized by the conditions
\[ \der{\cL}{x_i} - \frac{\d}{\d t} \der{\cL}{\dot{x}_i} = 0 \qquad \text{ for } i = 1, \ldots, N, \]
which are known as the \emph{Euler-Lagrange equations}. We will usually write them as a single vector-valued equation,
\begin{equation}\label{cont-EL}
\der{\cL}{x} - \frac{\d}{\d t} \der{\cL}{\dot{x}} = 0. 
\end{equation}
In general this is a second order differential equation. We will assume that the Lagrangian is \emph{regular}, i.e.\@ $\det \der{^2 \cL}{\dot{x}^2} \neq 0$. Then the Euler-Lagrange equation can always be solved for $\ddot{x}$.

One approach to discretizing the Euler-Lagrange equation \eqref{cont-EL} is to discretize the action integral \eqref{cont-act} and to consider discrete curves that are a critical points of this discrete action. Usually, one looks for a discrete Lagrange function $L_\disc: \R^N \times \R^N \times \R_{>0} \rightarrow \R$ and defines the discrete action as
\[ S_\disc((x_j)_j,h) := \sum_{j=1}^n h L_\disc(x_{j-1},x_j,h). \]
A discrete curve $x = (x_0, \ldots, x_n)$ is a critical point of $S_\disc$ in the set of all discrete curves with the same endpoints $x_0$ and $x_n$ for some fixed step size $h$ if and only if it satisfies the \emph{discrete Euler-Lagrange equation}
\begin{equation}\label{disc-EL}
\D_2 L_\disc(x_{j-1},x_j,h) + \D_1 L_\disc(x_j,x_{j+1},h) = 0 \qquad \text{for } j=1,\ldots,n-1,
\end{equation} 
where $\D_1 L_\disc$ and $\D_2 L_\disc$ denote the partial derivatives of $L_\disc$.

A discrete Lagrangian can be scaled by any nonzero $h$-dependent factor without affecting the dynamics. The following concept provides a natural scaling.

\begin{definition}\label{def-consistency}
\begin{enumerate}[$(a)$]
\item A smooth function $\Phi: \left( \R^N  \right)^2 \times \R_{>0} \rightarrow \R$ is a \emph{consistent discretization} of a smooth function $g: T\R^N \rightarrow \R$ if there exist the functions $g_i: \left(\R^N\right)^{n_i} \rightarrow \R$ such that for any smooth curve $x$ there holds
\[
\Phi(x(t),x(t+h),h) = g(x(t),\dot{x}(t)) + \sum_{i=1}^\infty h^i g_i[x(t)].
\]
If $x$ is not analytic this should be interpreted as an asymptotic expansion. In particular, this implies that
\[ \Phi(x(t),x(t+h),h) = g(x(t),\dot{x}(t)) + \O(h) \quad \text{as } h \rightarrow 0. \]

\item Consider a smooth function $g: T^{(2)}\R^N \rightarrow \R$, where $T^{(2)}\R^N$ is the second order tangent bundle of $\R^N$. A smooth function $\Phi: \big(\R^N\big)^3 \times \R_{>0} \rightarrow \R$ is a \emph{consistent discretization} of $g$ if for any smooth curve $x$ and for all $t$ there holds
\[ \Phi(x(t-h),x(t),x(t+h),h) = g(x(t),\dot{x}(t),\ddot{x}(t)) + \O(h)  \quad \text{as } h \rightarrow 0. \]
\end{enumerate}
\end{definition}

\begin{remark}
In Section \ref{sec-modlag} we will introduce the symbol $\simeq$ to denote asymptotic expansions. Here we prefer to use the usual equality sign because in practice we can often restrict to analytic curves. We reserve the symbol $\simeq$ for ``unavoidable'' asymptotic expansions, i.e.\@ situations where we generally do not have convergence even if all the relevant functions are analytic.
\end{remark}

There are two reasons why we put a stronger conditions in part $(a)$ than in part $(b)$. In Section \ref{sec-modlag} we will take $\Phi$ to be a discrete Lagrangian and we will need to write it as a power series to start our construction of a modified Lagrangian. Hence in the context of this paper, it is natural to include the existence of such an expansion in the notion of consistency. For the difference equations, for which part $(b)$ is the relevant definition, no such assumption is necessary. Additionally, the fact that the error term is given as a power series guarantees that its derivatives also $\O(h)$. We need this property in order to prove the following important observation.

\begin{proposition}\label{prop-consistency}
If $L_\disc: \left( \R^N  \right)^2 \times \R_{>0} \rightarrow \R$ is a consistent discretization of $\cL: T \R^N \rightarrow \R$, then the left hand side of the discrete Euler-Lagrange equation \eqref{disc-EL} is a consistent discretization of the left hand side of the continuous Euler-Lagrange equation \eqref{cont-EL}.
\end{proposition}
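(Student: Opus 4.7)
The plan is to Taylor-expand $\D_1 L_\disc(x(t), x(t+h), h)$ and $\D_2 L_\disc(x(t-h), x(t), h)$ separately in $h$, note that each has a singular $\O(1/h)$ piece, and observe that these singularities cancel in the sum while the finite remainder assembles into the continuous Euler--Lagrange expression.

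First I would convert the consistency expansion, which is stated pointwise along smooth curves, into an asymptotic expansion of $L_\disc(a,b,h)$ regarded as a function of its three arguments. Since the value $L_\disc(x(t), x(t+h), h)$ depends on $x$ only through $(a,b) := (x(t), x(t+h))$, the right-hand side of consistency must be independent of any particular smooth curve realizing these endpoints. Evaluating on the affine curve $x(s) = a + (s-t)v$, on which all derivatives beyond $\dot x = v = (b-a)/h$ vanish, produces an expansion
\[ L_\disc(a, b, h) = \cL(a,v) + h\,\phi_1(a,v) + h^2 \phi_2(a,v) + \O(h^3), \]
with $\phi_i(a,v) := g_i\bigl[a + (\cdot - t)v\bigr]\big|_t$. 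By the remark preceding the proposition, this power series can be differentiated termwise with $\O(h^n)$ control on remainders; using $\der{v}{a}\big|_b = -\frac{1}{h}$ and $\der{v}{b}\big|_a = \frac{1}{h}$, one obtains
\[ \D_1 L_\disc(a,b,h) = \der{\cL}{x}(a,v) - \frac{1}{h}\der{\cL}{\dot x}(a,v) - \der{\phi_1}{\dot x}(a,v) + \O(h), \]
\[ \D_2 L_\disc(a,b,h) = \frac{1}{h}\der{\cL}{\dot x}(a,v) + \der{\phi_1}{\dot x}(a,v) + \O(h). \]

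Next I would substitute $(a,b) = (x(t-h), x(t))$ into $\D_2 L_\disc$ and $(a,b) = (x(t), x(t+h))$ into $\D_1 L_\disc$, add them, and expand $v_\pm = \dot x(t) \pm \frac{h}{2}\ddot x(t) + \O(h^2)$, $x(t-h) = x(t) - h\,\dot x(t) + \O(h^2)$, along with each partial derivative of $\cL$ around $(x(t), \dot x(t))$. The two $\pm\frac{1}{h}\der{\cL}{\dot x}(x(t),\dot x(t))$ singularities cancel exactly, the $\pm\der{\phi_1}{\dot x}(x(t),\dot x(t))$ contributions cancel modulo $\O(h)$, and what survives of the first-order $h$-corrections in the arguments is precisely
\[ -\dot x(t)\,\der{^2\cL}{x\,\partial \dot x}(x(t),\dot x(t)) - \ddot x(t)\,\der{^2\cL}{\dot x^2}(x(t),\dot x(t)) = -\frac{\d}{\d t}\der{\cL}{\dot x}(x(t), \dot x(t)). \]
Adding the surviving $\der{\cL}{x}(x(t), \dot x(t))$ coming from the $\D_1$-piece then yields the continuous Euler--Lagrange expression up to $\O(h)$, which is exactly what Definition \ref{def-consistency}$(b)$ requires.

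The main obstacle is the initial reformulation: extracting an asymptotic expansion of $L_\disc(a,b,h)$ with termwise-differentiable remainders from a consistency condition that is a priori stated only along smooth curves. The power-series form of the definition, which the remark flags for exactly this reason, is what legitimizes the manoeuvre; once it is in hand, the rest reduces to a careful Taylor computation in which the $\O(1/h)$ singular terms conspire to cancel.
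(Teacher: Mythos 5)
Your proof is correct and follows essentially the same route as the paper's: rewrite the consistency expansion so that $\cL$ is evaluated at $\bigl(x(t),\frac{x(t+h)-x(t)}{h}\bigr)$, differentiate to obtain $\D_1 L_\disc$ and $\D_2 L_\disc$ with their $\O(1/h)$ singular parts, and observe that in the sum $\D_2 L_\disc(x(t-h),x(t),h)+\D_1 L_\disc(x(t),x(t+h),h)$ these singular parts combine into a backward difference quotient of $\der{\cL}{\dot{x}}$ that produces $-\frac{\d}{\d t}\der{\cL}{\dot{x}}$ up to $\O(h)$. If anything you are more careful than the paper at the one delicate point: your affine-curve device turns the curve-wise expansion into an honest expansion in $(a,v,h)$ that can be differentiated termwise, and you track the $\O(1)$ contributions $\pm\der{\phi_1}{\dot{x}}$ explicitly and show they cancel between $\D_1$ and $\D_2$, whereas the paper's separate intermediate formulas for $\D_1 L_\disc$ and $\D_2 L_\disc$ absorb these contributions into $\O(h)$ remainders, which is only legitimate once one knows they cancel in the sum.
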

\begin{proof}
From the definition of consistency it follows that there exist functions $\widetilde{g}_i$ such that
\[
L_\disc(x(t),x(t+h),h) = \cL \!\left( x(t), \frac{x(t+h)-x(t)}{h} \right) + \sum_{i=1}^\infty h^i \widetilde{g}_i[x(t)]. \]
Taking a variation of the curve $x$ we find
\begin{align*}
&\D_1 L_\disc( x(t),x(t+h),h ) \delta x(t) + \D_2 L_\disc( x(t),x(t+h),h ) \delta x(t+h) \\
&= \der{\cL}{x} \!\left( x(t), \frac{x(t+h)-x(t)}{h} \right) \delta x(t) + \frac{1}{h}\der{\cL}{\dot{x}} \!\left( x(t), \frac{x(t+h)-x(t)}{h} \right) \delta x(t+h) \\
&\qquad - \frac{1}{h}\der{\cL}{\dot{x}} \!\left( x(t), \frac{x(t+h)-x(t)}{h} \right) \delta x(t) + \O(h),
\end{align*}
where $\der{\cL}{x}$ and $\der{\cL}{\dot{x}}$ denote the partial derivatives of $\cL$. Therefore,
\begin{align*}
\D_1 L_\disc( x(t),x(t+h),h ) &= \der{\cL}{x} \!\left( x(t), \frac{x(t+h)-x(t)}{h} \right) \\
&\qquad - \frac{1}{h}\der{\cL}{\dot{x}} \!\left( x(t), \frac{x(t+h)-x(t)}{h} \right)  + \O(h)
\end{align*}
and 
\[ \D_2 L_\disc( x(t),x(t+h),h ) = \frac{1}{h}\der{\cL}{\dot{x}} \!\left( x(t), \frac{x(t+h)-x(t)}{h} \right) + \O(h). \]
It follows that
\begin{align*}
\D_2 L_\disc&( x(t-h),x(t),h ) + \D_1 L_\disc( x(t),x(t+h),h ) \\
&= \der{\cL}{x} \!\left( x(t), \frac{x(t+h)-x(t)}{h} \right) - \frac{1}{h}\der{\cL}{\dot{x}} \!\left( x(t), \frac{x(t+h)-x(t)}{h} \right) \\
&\qquad + \frac{1}{h}\der{\cL}{\dot{x}} \!\left( x(t-h), \frac{x(t)-x(t-h)}{h} \right) + \O(h) \\
&= \der{\cL}{x}(x(t),\dot{x}(t)) - \frac{\d}{\d t} \der{\cL}{\dot{x}}(x(t),\dot{x}(t)) + \O(h). \qedhere
\end{align*}
\end{proof}

\begin{remark}
In most of the literature the discrete Lagrangian $L_\disc$ is chosen to be a consistent discretization of $h \cL$, rather than of $\cL$. 
\end{remark}

The discrete Lagrangian can be seen as a generating function for a symplectic map $(x_j,p_j) \mapsto (x_{j+1},p_{j+1})$, determined by 
\begin{equation}\label{disc-sympl} p_j = - \D_1 L_\disc(x_j,x_{j+1},h) \qquad \text{and} \qquad p_{j+1} = \D_2 L_\disc(x_j,x_{j+1},h).
\end{equation} 
In this way a variational integrator for $\cL$ leads to a symplectic integrator for the corresponding Hamiltonian system
\begin{equation}\label{hamiltonian} 
\dot{x} = \der{\cH}{p}, \qquad \dot{p} = -\der{\cH}{x},
\end{equation} 
where $p = \der{\cL}{\dot{x}}$ and the Hamilton function is given by $\cH = \inner{p}{\dot{x}} - \cL$, considered as a function of $x$ and $p$. The brackets $\inner{\cdot}{\cdot}$ denote the standard scalar product on $\R^N$.

\begin{example}\label{ex-methods}
There are many ways to obtain a discrete Lagrangian $L_\disc$ from a given continuous Lagrangian $\cL$. Some examples are:
\begin{enumerate}[$(a)$]
\item \label{Lh-MP} $\displaystyle L_\disc(x_j,x_{j+1},h) = \cL\!\left( \frac{x_j+x_{j+1}}{2} , \frac{x_{j+1}-x_j}{h} \right),$

in which case the symplectic map \eqref{disc-sympl} is the one obtained by applying the implicit midpoint rule to \eqref{hamiltonian}.

\item \label{Lh-SV} $\displaystyle 
L_\disc(x_j,x_{j+1},h) = \frac{1}{2} \cL\!\left( x_j, \frac{x_{j+1}-x_j}{h} \right) + \frac{1}{2} \cL\!\left( x_{j+1}, \frac{x_{j+1}-x_j}{h} \right), $

in which case the symplectic map \eqref{disc-sympl} is the one obtained by applying the St\"ormer-Verlet method to \eqref{hamiltonian}, assuming $\cL$ is separable. The St\"ormer-Verlet is a prime example of a geometric numerical integrator, as it can be used to illustrate many different concepts of geometric integration \cite{hairer2003geometric}.

\item \label{Lh-SE1} $\displaystyle L_\disc(x_j,x_{j+1},h) = \cL\!\left( x_j, \frac{x_{j+1}-x_j}{h} \right)$ 

or

\item \label{Lh-SE2} $\displaystyle
L_\disc(x_j,x_{j+1},h) = \cL\!\left( x_{j+1}, \frac{x_{j+1}-x_j}{h} \right),$

for which the symplectic maps \eqref{disc-sympl} are the ones obtained by applying the two variants of the symplectic Euler method to \eqref{hamiltonian}.
\end{enumerate}
\end{example}

\section{Modified equations}\label{sec-modeqn}

An important tool for studying the long-term behavior of numerical integrators is \emph{backward error analysis}. Instead of comparing a discrete solution $(x_j)_{j=0,\ldots,n}$ to a solution $x:[a,b] \rightarrow \R^N$ of the continuous system, backward error analysis compares the original differential equation to another differential equation satisfied by a curve $\widetilde{x}:[a,b] \rightarrow \R^N$ that interpolates the discrete solution. The latter differential equation is known as the \emph{modified equation}. 

\subsection{First order equations}

For first order equations the notion of modified equations is well-known, see for example \cite{calvo1994modified,hairer1994backward,moan2006modified,reich1999backward}, \cite[Chapter IX]{hairer2006geometric}, and the references therein. Nevertheless, defining a modified equation is a subtle matter. Let $\Psi(x_j,x_{j+1},h) = 0$ be a discretization of the differential equation. We would like to define a modified equation along the following lines.

\begin{pseudodef}
The differential equation $\dot{x} = f(x,h)$ is a \emph{modified equation} for the difference equation $\Psi(x_j,x_{j+1},h) = 0$ if for any solution $(x_j)_j$ of the difference equation, the differential equation has a solution $x$ that satisfies $x(jh)=x_j$ for all $j$.
\end{pseudodef}

However, we need to be more careful because the right hand side of the modified equation will generally be a power series in $h$ that does not converge. We write
\[ f(x,h) = f_0(x) + h f_1(x) +h^2 f_2(x) + \ldots, \]
and denote by $\cT_k$ the operator which truncates a power series in $h$ after order $k$,
\[ \cT_k \!\left( \sum_{i=0}^\infty f_i h^i \right) = \sum_{i=0}^k f_i h^i.\]
We call this the \emph{$k$-th truncation} of the power series. We say that two power series $f$ and $g$ are equal up to order $k$ if $\cT_k(f) = \cT_k(g)$, hence ``up to'' is to be understood as ``up to and including.'' 

Furthermore, we will need to consider families of curves parameterized by the step-size $h$, rather than just individual curves. Admissible families are those whose derivatives do not blow up as $h \rightarrow 0$.

\begin{definition}\label{def-admissible}
A family $(x_h)_{h \in \R_{>0}}$ of smooth curves $x_h: [a_h,b_h] \rightarrow \R^N$ is called \emph{admissible} if there exists a $h_{\mathrm{max}} > 0$ such that for each $k \geq 0$, $\big\| x_h^{(k)} \big\|_{\infty}$ is bounded as a function of $h \in (0,h_{\max}]$, where $\| \cdot \|_{\infty}$ denotes the supremum norm.
\end{definition}

Admissibility of a family of curves $(x_h)_h$ guarantees that in power series expansions like $x_h(t+h) = x_h(t) + h \dot{x}_h(t) + \frac{h^2}{2} \ddot{x}_h(t) + \ldots$ the asymptotic behavior of each term is determined by the exponent of $h$ in that term. This is essential in much of what follows and would not be the case for general families of curves. Now we are in a position to define a modified equation.

\begin{definition}\label{defi-firstorder}
Let $\Psi: \left( \R^N \right)^2 \times \R_{>0} \rightarrow \R^N$ be a consistent discretization of some $g: T\R^N \rightarrow \R$, with $\det \der{g}{\dot{x}} \neq 0$. The formal differential equation $\dot{x} = f(x,h)$, where
\[ f(x,h) = f_0(x) + h f_1(x) +h^2 f_2(x) + \ldots, \]
is a \emph{modified equation} for the difference equation $\Psi(x_j,x_{j+1},h) = 0$ if, for every $k$, every admissible family of solutions $(x_h)_h$ of the truncated differential equation 
\[ \dot{x}_h = \cT_k \left( f(x_h,h) \right), \qquad h \in \R_{>0} , \]
satisfies $\Psi(x_h(t),x_h(t+h),h) = \O(h^{k+1})$ as $h \rightarrow 0$ for all $t$.
\end{definition}

\begin{remark}
The discrete dynamics is invariant under scaling of the function $\Psi$ by a nonzero $h$-dependent factor, but the condition that $\Psi(x(t),x(t+h),h) = \O(h^{k+1})$ is not. This is not a problem because the scaling is constrained by the fact that $\Psi$ is a consistent discretization of some function $g$.
\end{remark}

\begin{proposition}
Let $\Psi: \left( \R^N \right)^2 \times \R_{>0} \rightarrow \R^N$ be a consistent discretization of some smooth $g: T\R^N \rightarrow \R^N$, with $\det \der{g}{\dot{x}} \neq 0$. Then the difference equation $\Psi(x_j,x_{j+1},h) = 0$ has a unique modified equation.
\end{proposition}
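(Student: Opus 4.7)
The plan is to construct the coefficients $f_0, f_1, f_2, \ldots$ of the modified equation one at a time, by induction on $k$, and to verify at each step that the coefficient is uniquely determined by the non-degeneracy hypothesis $\det \der{g}{\dot{x}} \neq 0$.

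For the base case, I would take any admissible family of solutions of $\dot{x}_h = f_0(x_h)$ and Taylor expand
\[ \Psi(x_h(t), x_h(t+h), h) = g(x_h(t), \dot{x}_h(t)) + \O(h) = g(x_h(t), f_0(x_h(t))) + \O(h), \]
using consistency of $\Psi$. For this to vanish to order $h^0$ at every point $x \in \R^N$ (noting that any $x$ can appear as $x_h(t)$), we need $g(x, f_0(x)) = 0$. The implicit function theorem applied to $g(x, v) = 0$ with respect to $v$, justified by $\det \der{g}{\dot{x}} \neq 0$, yields a unique smooth $f_0$.

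For the inductive step, I would assume that $f_0, \ldots, f_{k-1}$ have been uniquely determined so that every admissible family of solutions of $\dot{x}_h = \cT_{k-1}(f(x_h,h))$ satisfies $\Psi(x_h(t), x_h(t+h), h) = \O(h^k)$. Now consider an admissible family of solutions of $\dot{x}_h = \cT_k(f(x_h,h))$. Writing $\dot{x}_h = \cT_{k-1}(f(x_h,h)) + h^k f_k(x_h) + \O(h^{k+1})$, and expanding $x_h(t+h)$ as a Taylor series in $h$ (admissibility guarantees each coefficient is $\O(1)$), the coefficient of $h^k$ in the expansion of $\Psi(x_h(t), x_h(t+h), h)$ splits into two pieces: a part $R_k(x_h(t))$ that depends only on $x_h(t)$, $f_0, \ldots, f_{k-1}$ and their derivatives, plus the new contribution
\[ \der{g}{\dot{x}}(x_h(t), f_0(x_h(t))) \cdot f_k(x_h(t)) \]
arising from the $h^k f_k$ term in $\dot{x}_h$ via the leading derivative in the Taylor expansion. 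Requiring the coefficient of $h^k$ to vanish at an arbitrary point $x \in \R^N$ yields the linear equation
\[ \der{g}{\dot{x}}(x, f_0(x)) \cdot f_k(x) + R_k(x) = 0, \]
which by invertibility of $\der{g}{\dot{x}}$ has a unique smooth solution $f_k$.

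The main obstacle, and the step that requires the most care, is verifying the structural claim that $f_k$ enters the $h^k$-coefficient of the expansion only linearly through $\der{g}{\dot{x}}$. This requires tracking how the higher derivatives $x_h^{(\ell)}(t)$ are recursively expressed through the truncated ODE by repeated differentiation; each additional derivative costs one power of $h$, so the $h^k f_k$ contribution to higher derivatives $x_h^{(\ell)}$ only affects the $h^{k+\ell-1}$ and higher coefficients of $\Psi$, and therefore cannot contaminate the analysis at order $h^k$. The admissibility hypothesis from Definition \ref{def-admissible} and the power-series form of the error in Definition \ref{def-consistency}$(a)$ are what make this term-by-term matching meaningful, ensuring that residuals really are of the claimed order uniformly in $t$ as $h \to 0$.
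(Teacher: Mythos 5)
Your proposal is correct and follows essentially the same route as the paper: expand $\Psi(x_h(t),x_h(t+h),h)$ in powers of $h$, express higher derivatives of $x_h$ through the truncated ODE, and observe that the $h^k$-coefficient is $\der{g}{\dot{x}}(x,f_0(x))\,f_k(x)$ plus terms determined by $f_0,\ldots,f_{k-1}$, which yields a recurrence solvable by the non-degeneracy hypothesis. You are somewhat more explicit than the paper about the base case (implicit function theorem for $g(x,f_0(x))=0$) and about why $f_k$ enters only linearly at order $h^k$, but the argument is the same.
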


\begin{proof}
Because of the consistency, the Taylor expansion of $\Psi(x(t),x(t+h),h)$ takes the form
\begin{equation}\label{mod-taylor}
\Psi(x(t),x(t+h),h) = g(x,\dot{x}) + h g_1[x] + h^2 g_2[x] + \ldots ,
\end{equation}
where $g_1[x], g_2[x], \ldots$ depend on $x$ and its derivatives of arbitrary order. We look for a modified equation of the form 
\[ \dot{x} = f(x,h) = f_0(x) + hf_1(x) + h^2f_2(x) + \ldots.\]
This ansatz allows us to write the higher derivatives of $x$ as linear combinations of \emph{elementary differentials} \cite[Chapter III.1]{hairer2006geometric},
\[\begin{split}
\dot{x} &= f, \\
\ddot{x} &= f'f, \\
x^{(3)} &\stackrel[\vdots]{}{=} f''(f,f) + f'f'f, 
\end{split}\]
where a prime $'$ denotes differentiation with respect to $x$, and the arguments $x$ and $h$ of $f$ and its derivatives are omitted. Plugging these expressions into Equation \eqref{mod-taylor} we get
\[ \Psi(x(t),x(t+h),h) = g(x,f) + h g_1(x,f,f'f,f''(f,f) + f'f'f, \ldots) + \ldots, \]
where again the arguments of $f$ and its derivatives were omitted. By definition of modified equation this should be zero up to any order, 
\[ g(x,f) + h g_1(x,f,f'f,f''(f,f) + f'f'f, \ldots) + \ldots = 0. \]
The $h^k$-term of this expression is of the form
\[ \der{g}{\dot{x}} f_k + \text{terms depending only on } x, f_0, \ldots, f_{k-1},g,g_1,\ldots,g_k. \]
Since $g,g_1,g_2,\ldots$ are determined by $\Psi$, this gives us a recurrence relation for the $f_k$. 
\end{proof}

Some authors (e.g.\@ \citet*{calvo1994modified, hairer1994backward}) use the following property as their definition of a modified equation.

\begin{proposition}
Consider a difference equation of the form 
\[ x_{j+1} = x_j + h \Phi(x_j,x_{j+1}) \]
and let $(x_h)_h$ be an admissible family of solutions of the truncated modified equation $\dot{x}_h = \cT_k( f(x_h,h))$. Then 
\[ x_h(t+h) = x_h(t) + h \Phi(x_h(t),x_h(t+h)) + \O(h^{k+2}). \]
\end{proposition}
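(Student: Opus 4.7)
The plan is to reduce the statement directly to Definition \ref{defi-firstorder} by an appropriate rescaling of the discretization.

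First, I would rewrite the difference equation in the form $\widetilde{\Psi}(x_j, x_{j+1}, h) = 0$, where
\[
\widetilde{\Psi}(x, y, h) := \frac{y - x}{h} - \Phi(x, y).
\]
Dividing by $h$ does not change the set of solutions, but it does bring the difference equation into the normalized form required by Definition \ref{defi-firstorder}. A short Taylor expansion shows that $\widetilde{\Psi}$ is a consistent discretization of the smooth function $g(x, \dot{x}) := \dot{x} - \Phi(x, x)$, since
\[
\frac{x(t+h) - x(t)}{h} = \dot{x}(t) + \tfrac{h}{2}\ddot{x}(t) + \ldots, \qquad \Phi(x(t), x(t+h)) = \Phi(x(t), x(t)) + \O(h),
\]
and both expansions proceed in integer powers of $h$. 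The regularity condition is automatic here: $\der{g}{\dot{x}} = I$ is invertible, so the modified equation exists and is unique, and is the same formal equation $\dot{x} = f(x, h)$ referenced in the statement.

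Now I would apply Definition \ref{defi-firstorder} directly to $\widetilde{\Psi}$. By hypothesis, $(x_h)_h$ is an admissible family of solutions of $\dot{x}_h = \cT_k(f(x_h, h))$, so the definition yields
\[
\widetilde{\Psi}(x_h(t), x_h(t+h), h) = \O(h^{k+1}) \quad \text{as } h \rightarrow 0.
\]
Multiplying through by $h$ turns this into
\[
x_h(t+h) - x_h(t) - h\,\Phi(x_h(t), x_h(t+h)) = \O(h^{k+2}),
\]
which is the claimed identity after rearrangement.

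There is essentially no obstacle in this argument; it is a bookkeeping exercise in matching the scaling convention of Definition \ref{defi-firstorder}. The only point worth emphasizing is the remark after that definition: the $\O(h^{k+1})$ bound is \emph{not} invariant under $h$-dependent rescaling of $\Psi$, which is precisely why passing from the raw expression $x_{j+1} - x_j - h\Phi(x_j, x_{j+1})$ to its division-by-$h$ counterpart $\widetilde{\Psi}$ upgrades the error from $\O(h^{k+1})$ to $\O(h^{k+2})$. The consistency requirement fixes the scaling, and once the right representative is chosen the proposition follows at once from the definition.
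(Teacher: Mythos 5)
Your proposal is correct and follows essentially the same route as the paper: write the difference equation as $\Psi(x_j,x_{j+1},h) = \frac{x_{j+1}-x_j}{h} - \Phi(x_j,x_{j+1})$, observe that this is a consistent discretization of $\dot{x} - \Phi(x,x)$, apply Definition \ref{defi-firstorder} to get an $\O(h^{k+1})$ bound, and multiply by $h$. Your additional remarks on the regularity condition and the scaling convention are accurate but not needed beyond what the paper records.
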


\begin{proof}
The difference equation can be written in the form $\Psi(x_j,x_{j+1},h) = 0$, where $\Psi(x_j,x_{j+1},h) = \frac{x_{j+1}-x_j}{h} - \Phi(x_j,x_{j+1})$ is a consistent discretization of $\dot{x} - \Phi(x,x)$. Hence any admissible family of solutions $(x_h)_h$ of the modified equation truncated after order $k$ satisfies
\begin{align*} 
x_h(t+h) - x_h(t) - h \Phi(x_h(t),x_h(t+h)) &= h \Psi(x_h(t),x_h(t+h),h) \\
&= \O(h^{k+2}). \qedhere
\end{align*}
\end{proof}

\subsection{Second order equations}

For the purposes of this paper we need to generalize Definition \ref{defi-firstorder}. Since we want to consider variational integrators, we need to introduce a notion of modified equations for second order difference equations.

\begin{definition}\label{defi-secondorder}
Let $\Psi: \big(\R^N\big)^3 \times \R_{>0} \rightarrow \R^N$ be a consistent discretization of $g: T^{(2)}\R^N \rightarrow \R^N$, with $\det \der{g}{\ddot{x}} \neq 0$. The formal differential equation $\ddot{x} = f(x,\dot{x},h)$, where
\[ f(x,\dot{x},h) = f_0(x,\dot{x}) + h f_1(x,\dot{x}) +h^2 f_2(x,\dot{x}) + \ldots, \]
is a \emph{modified equation} for the difference equation $\Psi(x_{j-1},x_j,x_{j+1},h) = 0$ if, for every $k$, every admissible family $(x_h)_h$ of solutions of the truncated differential equation 
\[ \ddot{x}_h = \cT_k \left( f(x_h,\dot{x}_h,h) \right) \]
satisfies $\Psi(x_h(t-h),x_h(t),x_h(t+h),h) = \O(h^{k+1})$ for all $t$.
\end{definition}
 
As in the first order case, we have existence and uniqueness. 
 
\begin{proposition}
Let $\Psi: \big(\R^N\big)^3 \times \R_{>0} \rightarrow \R^N$ be a consistent discretization of some smooth function $g: T^{(2)}\R^N \rightarrow \R^N$, with $\det \der{g}{\ddot{x}} \neq 0$. Then the difference equation $\Psi(x_{j-1},x_j,x_{j+1},h) = 0$ has a unique modified equation.
\end{proposition}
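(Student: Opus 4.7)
The plan is to mirror the uniqueness proof for the first order case almost verbatim, the only substantive change being that the ansatz now determines $\ddot{x}$ rather than $\dot{x}$, so every higher derivative of a solution must be obtained by repeated $t$-differentiation of the ansatz. I would begin by combining smoothness of $\Psi$ with Taylor expansion of $x(t\pm h)$ to produce a formal power series
\[ \Psi(x(t-h),x(t),x(t+h),h) = g(x,\dot{x},\ddot{x}) + h g_1[x] + h^2 g_2[x] + \ldots, \]
in which each $g_k[x]$ depends polynomially on finitely many derivatives of $x$ at $t$; the leading term is $g(x,\dot{x},\ddot{x})$ by Definition \ref{def-consistency}(b).

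Next I would substitute the ansatz $\ddot{x} = f(x,\dot{x},h) = \sum_{i \geq 0} h^i f_i(x,\dot{x})$, so that $x^{(3)} = \der{f}{x}\dot{x} + \der{f}{\dot{x}} f$ and, by induction, each $x^{(j)}$ with $j \geq 2$ becomes a formal power series in $h$ whose coefficients are polynomial in the $f_i$ and their partial derivatives. Plugging these expressions into the expansion above turns $\Psi = 0$ into a single formal identity in $h$. To extract a recurrence for the $f_k$, I would track where the unknown $f_k$ can first appear: a short induction shows that $f_k$ enters $x^{(j)}$ only at order $h^k$ or higher, and therefore enters $h^j g_j[x]$ only at order $h^{k+j}$. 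Consequently the $f_k$-contribution at order $h^k$ comes solely from the $\ddot{x}$-slot of the leading term $g(x,\dot{x},\ddot{x})$, yielding an equation of the form
\[ \der{g}{\ddot{x}}(x,\dot{x},f_0) \cdot f_k + R_k(x,\dot{x},f_0,\ldots,f_{k-1}) = 0. \]
The invertibility hypothesis $\det \der{g}{\ddot{x}} \neq 0$ then determines $f_0$ from $g(x,\dot{x},f_0) = 0$ via the implicit function theorem, and in turn yields each subsequent $f_k$ linearly from the data already computed, proving both existence and uniqueness.

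The main obstacle, and the only real difference from the first order proof, is the bookkeeping needed to justify the key structural claim above, that $f_k$ appears in the $h^k$ coefficient only through the single term $\der{g}{\ddot{x}} \cdot f_k$. This is a routine induction on $j$ using the chain rule applied to $\frac{\d}{\d t} f_i(x,\dot{x})$, but it carries considerably more notation than in the first order case because each differentiation re-introduces $\ddot{x} = f$ and thereby re-mixes the orders in $h$. Once this claim is in hand, the recursive solvability of the system is immediate from the invertibility hypothesis.
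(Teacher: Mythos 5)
Your proposal is correct and follows essentially the same route as the paper: Taylor-expand $\Psi$, substitute the ansatz $\ddot{x}=f(x,\dot{x},h)$ to express all higher derivatives via elementary differentials, and observe that the $h^k$-coefficient has the form $\der{g}{\ddot{x}}f_k + (\text{terms in } x,\dot{x},f_0,\ldots,f_{k-1})$, so that $\det\der{g}{\ddot{x}}\neq 0$ yields the recurrence. Your explicit order-counting argument for why $f_k$ can only enter the $h^k$-term through the leading $\ddot{x}$-slot is a welcome elaboration of a step the paper merely asserts.
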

\begin{proof}
The Taylor expansion of $\Psi$ takes the form
\begin{equation}\label{mod-taylor-2}
\Psi(x(t-h),x(t),x(t+h),h) = g(x,\dot{x},\ddot{x}) + h g_1[x] + h^2 g_2[x] + \ldots ,
\end{equation}
where $g_1[x], g_2[x], \ldots$ depend on $x$ and its derivatives of arbitrary order. We look for a modified equation of the form 
\[ \begin{cases}
\dot{x} = v \\
\dot{v} = f(x,v,h) = f_0(x,v) + hf_1(x,v) + h^2f_2(x,v) + \ldots.
\end{cases} \]
This first order formulation of the modified equation allows us to write the higher derivatives of $x$ as linear combinations of elementary differentials \cite[Chapter III.2]{hairer2006geometric},
\begin{equation}\label{P-series-general}
\begin{split}
\ddot{x} &= f, \\
x^{(3)} &= f_x v + f_v f, \\
x^{(4)} &\stackrel[\vdots]{}{=} f_{xx} (v,v) + 2 f_{xv} (f,v) + f_x f + f_{vv} (f,f) + f_v f_x v + f_v f_v f,
\end{split}
\end{equation}
where the arguments $x$, $v$ and $h$ of $f$ and its derivatives were omitted, and the subscripts denote partial derivatives. Plugging these expressions into Equation \eqref{mod-taylor-2} we get 
\[ 0 = \Psi(x(t-h),x(t),x(t+h),h) = g(x,\dot{x},f) + h g_1(x,\dot{x},f,f_x \dot{x} + f_v f, \ldots) + \ldots \]
where again the arguments of $f$ and its derivatives were omitted. The $h^k$-term of this expression is of the form
\[ \der{g}{\ddot{x}} f_k + \text{terms depending only on } x, \dot{x}, f_0, \ldots, f_{k-1},g,g_1,\ldots,g_k.\]
Since $g,g_1,g_2,\ldots$ are determined by $\Psi_h$, this gives us a recurrence relation for the $f_k$. 
\end{proof}

\begin{example}\label{ex-modeqn}
Consider the differential equation $\ddot{x} = -U'(x)$, where $U: \R^N \rightarrow \R$ is some smooth potential, and its St\"ormer-Verlet discretization
\[ \frac{x_{j+1} - 2 x_j + x_{j-1}}{h^2} = -U'(x_j). \]
The modified equation is of the form
\[
\ddot{x} = f(x,h) = f_0(x,\dot{x}) + h^2 f_2 (x,\dot{x}) + \O(h^4).
\]
In general we should also include odd order terms, but in this example they all vanish because of the symmetry of the difference equation. We evaluate a smooth curve $x$ on a mesh of size $h$. In particular we consider $x_j = x(t)$ and
\[ x_{j \pm 1}=  x(t \pm h) = x \pm h \dot{x} + \frac{h^2}{2} \ddot{x} \pm \frac{h^3}{6} x^{(3)} + \frac{h^4}{24} x^{(4)} \pm \frac{h^5}{120} x^{(5)} + \O(h^6) . \]
We write $v = \dot{x}$, plug the above expansion into the difference equation, and replace derivatives using Equation \eqref{P-series-general}. This gives us
\begin{align*}
 -h^2 g(x)
&= h^2 \ddot{x} + \frac{h^4}{12} x^{(4)} + \O(h^6) \\
&= h^2 (f_0 + h^2 f_2) + \frac{h^4}{12} \big( f_{0,xx} (v,v) + 2 f_{0,xv} (f_0,v) + f_{0,x} f_0 \\
&\hspace{4cm} + f_{0,vv} (f_0,f_0) + f_{0,v} f_{0,x} v + f_{0,v} f_{0,v} f_0 \big) + \O(h^6),
\end{align*}
where the arguments $x$ and $v$ of the $f_i$ were omitted. The $h^2$-term of this equation gives us $f_0(x,v)=-U'(x)$. In particular, partial derivatives of $f_0$ with respect to $v$ are zero. The $h^4$-term then reduces to $f_2 = \frac{1}{12} (U^{(3)}(x) (v,v) - U''(x) U(x))$. We find the modified equation
\[
\ddot{x} = - U' + \frac{h^2}{12} \left( U^{(3)} (\dot{x},\dot{x}) - U'' U \right) + \O(h^4),
\]
where the argument $x$ of $U$ and of its derivatives has been omitted.

Observe that the truncation after the second order term of this modified equation is \emph{not} an Euler-Lagrange equation because the second order term $\frac{h^2}{12}\left( U^{(3)}(\dot{x},\dot{x}) - U''U' \right)$ contains first derivatives of $x$ but no second derivative of $x$. However, we will see that it can be obtained from an Euler-Lagrange equation by solving it for $\ddot{x}$ and truncating the resulting power series.
\end{example}

\begin{example}[Harmonic oscillator]
The simplest instance of the last example is the case that $x$ is real-valued and $U(x) = \frac{1}{2}x^2$, which gives us the difference equation
\[ \frac{x_{j+1} - 2 x_j + x_{j-1}}{h^2} = - x_j . \]
The modified equation for this difference equation is of the form
\begin{equation}\label{ansatz}
\ddot{x} = f(x,h) = f_0(x) + h^2 f_2 (x) + h^4 f_4 (x) + \O(h^6).
\end{equation}
The fact that the $f_i$ do not depend on $v = \dot{x}$ in this example vastly simplifies the calculations. It should be noted that this is very atypical behavior. In almost all other examples at least some $f_i$ do depend on $v = \dot{x}$. From Equation \eqref{ansatz} we obtain the following simplified form of the expressions in Equation \eqref{P-series-general}
\begin{align*}
x^{(3)} &= f'\dot{x}, \\
x^{(4)} &= f''\dot{x}^2 + f'f, \\
x^{(5)} &= f^{(3)}\dot{x}^3 + 3 f''f\dot{x} + (f')^2\dot{x}, \\
x^{(6)} &\stackrel[\vdots]{}{=} f^{(4)}\dot{x}^4 + 6 f^{(3)}f\dot{x}^2 + 5 f''f'\dot{x}^2 + 3 f''f^3 + (f')^2 f,
\end{align*}
where the arguments $x$ and $h$ of $f$ and its derivatives were omitted. If $x(t) = x_j$, then
\begin{align*}
x_{j \pm 1} =  x(t \pm h) &= x \pm h \dot{x} + \frac{h^2}{2} \ddot{x} \pm \frac{h^3}{6} x^{(3)} + \frac{h^4}{24} x^{(4)}  \\
&\qquad \pm \frac{h^5}{120} x^{(5)} + \frac{h^6}{720} x^{(6)} \pm \frac{h^7}{5040} x^{(7)} + \O(h^8) . 
\end{align*} 
Plugging this into the difference equation we find
\begin{align*}
 -h^2 x 
&= h^2 \ddot{x} + \frac{h^4}{12} x^{(4)} + \frac{h^6}{360} x^{(6)} + \O(h^8) \notag\\
&= h^2 \left( f_0 + h^2 f_2 + h^4 f_4 \right) \notag\\
&\quad + \frac{h^4}{12} \left( f_0''\dot{x}^2 + h^2f_2''\dot{x}^2 + f_0'f_0 + h^2 f_0'f_2 + h^2 f_2'f_0 \right) \notag\\
&\quad + \frac{h^6}{360} \left( f_0^{(4)}\dot{x}^4 + 6 f_0^{(3)}f_0\dot{x}^2 + 5 f_0''f_0'\dot{x}^2 + 3 f_0''f_0^2 + (f_0')^2f_0 \right) + \O(h^8).
\end{align*}
The $h^2$-term of this equation gives us $f_0(x)=-x$, and hence $f_0'(x) = -1$ and $f_0''(x) = 0$. The $h^4$-term then reduces to $f_2(x) = \frac{-x}{12}$, hence $f_2'(x) = -\frac{1}{12}$ and $f_2''(x) = 0$. Finally, the $h^6$-term gives
\[ f_4(x) = - \frac{1}{12}\left(\frac{x}{12}+\frac{x}{12}\right) + \frac{x}{360} = -\frac{x}{90}. \]
Therefore, the modified equation is
\[
\ddot{x} = - x - \frac{h^2}{12} x - \frac{h^4}{90} x + \O(h^6).
\]
In Figure \ref{fig-harmonic} we see that the solution of the fourth truncation of the modified equation agrees very well with the discrete flow, even with a large step-size.

\begin{figure}[t]
\begin{minipage}{.54\linewidth}
\includegraphics[width=\linewidth]{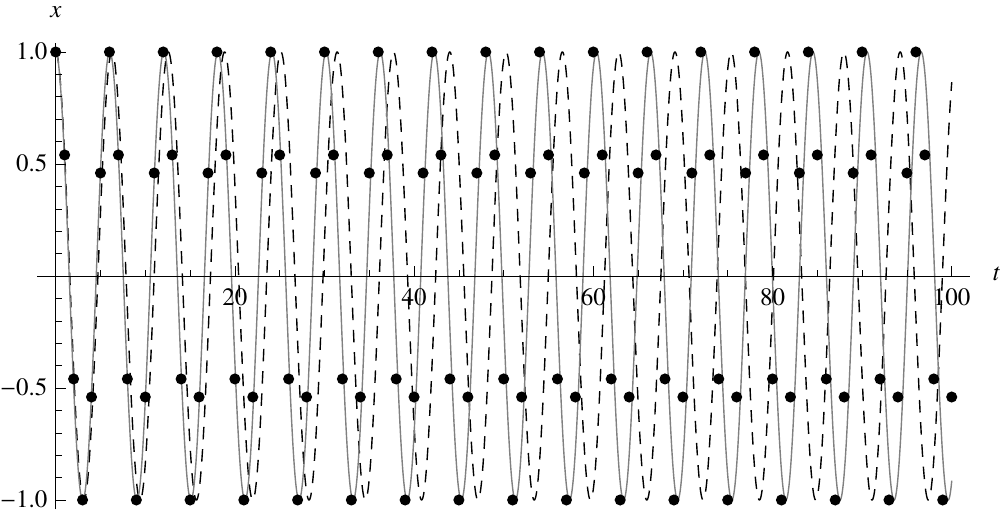}
\end{minipage}\hfill%
\begin{minipage}{.44\linewidth}
\includegraphics[width=\linewidth]{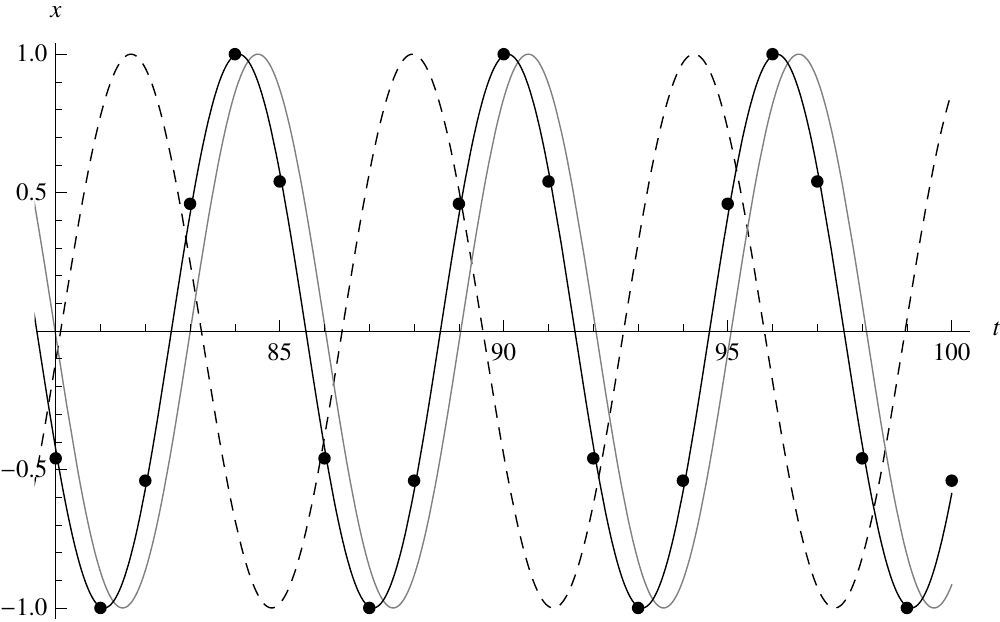}
\end{minipage}
\caption{The harmonic oscillator. Plotted are the exact solution (dashed line), the solution of the St\"ormer-Verlet discretization with step size $h=1$ (dots), and the solution of the modified equation truncated after order two (solid line). The right hand image is a magnification of the time interval $[80,100]$. Here the solution of the modified equation truncated after order four is drawn as well (dark solid line). The initial values are $x(0)=1$ and $\dot{x}(0)=0$ for the differential equations, and $x_0=1$ and $x_1=x(1)$ evaluated on the exact solution for the difference equation.}\label{fig-harmonic}
\end{figure}
\end{example}

\begin{remark}
In a surprising turn of events, solutions of the discrete system with step size $h=1$ are periodic, as can be observed in Figure \ref{fig-harmonic}. Apparently, solutions of the modified equation have a period of exactly $6$. This suggests that the modified equation for $h=1$ is $\ddot{x} = -\frac{\pi^2}{9} x$. Indeed, one can verify that in this example the modified Equation is given explicitly by
\[ \ddot{x} = - \left(\frac{2}{h} \arcsin\left(\frac{h}{2}\right) \right)^2 x. \]
This observation can be used as the basis of a simple proof of the well-known but very nontrivial expansion
\[ \left( \arcsin \frac{h}{2} \right)^2 = \frac{1}{2} \sum_{k=1}^\infty  \frac{(k-1)!^2}{(2k)!} h^{2k}. \]
The details of this argument are presented in \cite{vermeeren2015dynamical}.

For general $h \leq 1$ no periodicity is observed, but the solutions of the truncated modified equation are equally close or closer to the discrete solution.
\end{remark}

\section{Modified Lagrangians}\label{sec-modlag}

Given a variational integrator, we would like to find a Lagrangian that produces the modified equation as its Euler-Lagrange equation. The idea is to look for a modified Lagrangian $\cL_\mod ( x,\dot{x},h )$ such that the discrete Lagrangian is its \emph{exact discrete Lagrangian}, i.e.\@
\[ \int_0^h \cL_\mod ( x(t),\dot{x}(t),h ) \,\d t = h L_\disc (x_0, x_1,h), \]
where $x(t)$ is a critical curve for $\cL_\mod$ with $x(0) = x_0$ and $x(h) = x_1$. Since modified equations are generally non-convergent power series in $h$, the best we can hope for is to find such a modified Lagrangian up to an error of arbitrarily high order in $h$. Its Euler-Lagrange equation will then agree with the modified equation up to an error of the same order.

In intermediate steps of our construction we will find Lagrangians that depend on higher derivatives of the curve instead of just on $x$ and $\dot{x}$. Furthermore, the variational principle that these Lagrangians represent is unconventional: one looks for critical curves in a set of curves that need not be differentiable everywhere. Before starting the construction of a modified Lagrangian, we study this variational principle by itself. 

\subsection{Natural boundary conditions and meshed variational problems}

\begin{definition}
A \emph{classical variational problem} consists in finding critical curves of some action $\int_a^b \cL[x(t)] \,\d t$ in the set of smooth curves $\cC^\infty$.

A \emph{meshed variational problem} with mesh size $h$ consists in finding smooth critical curves of some action $\int_a^b \cL[x(t)] \,\d t$ in the set of piecewise smooth curves $\cC^{\cM,h}$ whose nonsmooth points occur at times that are an integer multiple of $h$ apart form each other,
\[
\cC^{\cM,h}
= \{ x \in \cC^0([a,b]) \mid \exists t_0 \in [a,b] : \forall t \in [a,b]: x \text{ not smooth at } t \ \Rightarrow\ t - t_0 \in h\mathbb{N} \}.
\]
This concept is illustrated in Figure \ref{fig-meshed}.
\end{definition}

\begin{figure}[b]
\begin{minipage}{.5\linewidth}
\centering
\begin{tikzpicture}[scale=.9, thick]
\clip (-.6,-.5) rectangle (5.5,2.5);
\draw[->] (-.2,-.3)--(5.2,-.3) node[right]{$t$};
\draw[->] (-.2,-.3)--(-.2,2.2) node[left]{$x$};
\draw (0,0) .. controls (1,1) and (4,1) .. (5,2);
\draw[dashed, darkgray]	(0,0) .. controls (1,-1) and (4,3) .. (5,2);
\draw[dashed]		 	(0,0) .. controls (0,2) and (4,1) .. (5,2);
\draw[dashed,gray] 		(0,0) .. controls (1,0) and (4,0) .. (5,2);
\node at (0,0) {$\bullet$};
\node at (5,2) {$\bullet$};
\end{tikzpicture}
\end{minipage}%
\begin{minipage}{.5\linewidth}
\centering
\begin{tikzpicture}[scale=.9, thick]
\clip (-.6,-.5) rectangle (5.5,2.5);
\draw[->] (-.2,-.3)--(5.2,-.3) node[right]{$t$};
\draw[->] (-.2,-.3)--(-.2,2.2) node[left]{$x$};

\draw (0,0) .. controls (1,1) and (4,1) .. (5,2);
\draw[dashed] 			(0,0) .. controls (0,2) and (4,1) .. (5,2);
\draw[dashed,darkgray]	(1.5,.7) .. controls (1.8,0) and (2.3,0) .. (2.5,.5);
\draw[dashed,darkgray]	(2.5,.5) .. controls (2.8,0) and (3.3,0) .. (3.5,1.25);
\draw[dashed,gray]		(4,1.4) .. controls (4,0) and (5,1) .. (5,2);
\node at (0,0) {$\bullet$};
\node at (1.5,.7) {$\bullet$};
\node at (2.5,.5) {$\bullet$};
\node at (3.5,1.25) {$\bullet$};
\node at (4,1.4) {$\bullet$};
\node at (5,2) {$\bullet$};
\end{tikzpicture}
\end{minipage}%
\caption{A smooth curve and a few of its variations for classical variational problem (left) and a meshed variational problem (right).}
\label{fig-meshed}
\end{figure}
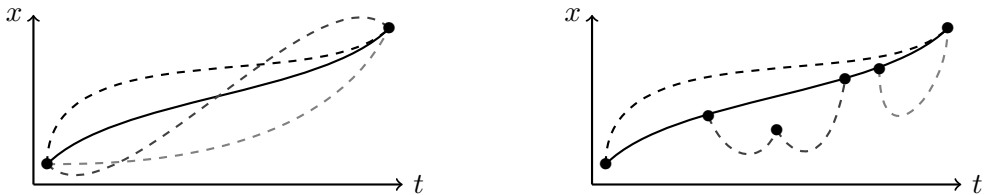

Consider a classical variational problem on the interval $[a,b]$ with a Lagrange function $\cL[x]$. The condition for criticality reads
\begin{equation}\label{ibp}
\int_a^b \frac{\delta \cL}{\delta x} \delta x \,\d t + \sum_{j=0}^{\infty} \frac{\delta \cL}{\delta x^{(j+1)}} \delta x^{(j)} \bigg|_a^b = 0 ,
\end{equation}
where 
\[ \frac{\delta \cL}{\delta x^{(j)}} = \sum_{i=0}^\infty (-1)^i \frac{\d^i}{\d t^i} \der{\cL}{x^{(j+i)}} \]
are variational derivatives of $\cL$. 

We assume that each of the quantities $x(a),x(b),\dot{x}(a),\dot{x}(b),\ddot{x}(a),\ddot{x}(b), \ldots$ is either fixed independently of the others or left completely free. Depending on which of those are fixed and which are left free, the following necessary and sufficient conditions follow from \eqref{ibp}:
\begin{flalign*}
\begin{split}
(a) \ & \frac{\delta \cL}{\delta x} = 0. \\
(b) \ & \text{If } x^{(j)}(a) \text{ is free, } \left. \frac{\delta \cL}{\delta x^{(j+1)}} \right|_{t=a} = 0. \qquad \text{If } x^{(j)}(b) \text{ is free, } \left. \frac{\delta \cL}{\delta x^{(j+1)}}  \right|_{t=b} = 0. 
\end{split}
\end{flalign*}
Condition $(a)$ is the Euler-Lagrange equation. Conditions $(b)$ are known as the \emph{natural boundary conditions}.

Now consider a meshed variational problem on the interval $[a,b]$ with Lagrange function $\cL[x]$. A necessary condition for criticality is that on each interval $[t, t + h] \subset [a,b]$ the corresponding classical variational problem, with boundary conditions on $x$ but not on the derivatives, is solved. This gives the conditions that on the whole time interval $[a,b]$:
\begin{flalign}\label{meshed-el}
\begin{split}
(a) \ & \frac{\delta \cL}{\delta x} = 0. \\
(b) \ & \forall j \geq 2: \frac{\delta \cL}{\delta x^{(j)}} = 0, \\ &\text{or equivalently: } \forall j \geq 2: \frac{\partial \cL}{\partial x^{(j)}} = \frac{\delta \cL}{\delta x^{(j)}} - \frac{\d}{\d t}\frac{\delta \cL}{\delta x^{(j+1)}} = 0 .
\end{split}&
\end{flalign}
These conditions are also sufficient, because any variation consistent with the meshed structure can be written as the sum of a smooth variation on $[a,b]$ and variations on intervals $[t, t + h]$ that vanish at the endpoints. In analogy with the classical case we call \eqref{meshed-el}$(b)$ the \emph{natural interior conditions}. They can also be seen as a version of the Weierstrass-Erdmann corner conditions, where the time of a corner is not allowed to be varied, but every point is a corner.

Since the Euler-Lagrange equation \eqref{meshed-el}$(a)$ together with suitable boundary conditions already determine a unique solution, meshed variational problems are overdetermined. This should not be surprising. After all we are looking for critical curves in a set $\cC^{\cM,h}$ of piecewise smooth curves, but at the same time require the critical curve to be in the subset  $\cC^\infty \subset \cC^{\cM,h}$ of smooth curves.

\subsection{A meshed modified Lagrangian}

Now we begin the construction of a modified Lagrangian from a given discrete Lagrangian $L_\disc$ that is a consistent discretization of some continuous Lagrangian. Using a Taylor expansion we can write the discrete Lagrangian $L_\disc\!\left(x\!\left(t-\frac{h}{2}\right),x\!\left(t+\frac{h}{2}\right),h\right)$ as a function of a smooth curve $x$ and its derivatives, all evaluated at time $t$,
\begin{equation} \label{sca-disclag} 
\begin{split}
\cL_\disc([x(t)],h) := L_\disc \!\Bigg( & x(t) - \frac{h}{2}\dot{x}(t) + \frac{1}{2} \left(\frac{h}{2}\right)^2 \ddot{x}(t) - \ldots , \\
&\quad x(t) + \frac{h}{2}\dot{x}(t) + \frac{1}{2} \left(\frac{h}{2}\right)^2 \ddot{x}(t) + \ldots ,\ h \Bigg).
\end{split}
\end{equation}
From Equation \eqref{sca-disclag} we proceed by expanding $L_\disc(\cdot,\cdot,h)$ around the point $(x(t),x(t))$ to write $\cL_\disc([x],h)$ explicitly as a power series in $h$. 

\begin{remark}
We could also have chosen $t-\frac{h}{2}$, $t+\frac{h}{2}$, or any other point in the interval $\left[ t-\frac{h}{2}, t+\frac{h}{2} \right]$ to expand around. Choosing the midpoint has the computational advantage that the expansions of some common terms like $\frac{1}{h}\left( x\!\left(t+\frac{h}{2}\right) - x\!\left(t-\frac{h}{2}\right) \right)$ and $\frac{1}{2} \left( x\!\left(t-\frac{h}{2}\right) + x\!\left(t+\frac{h}{2}\right) \right)$ only contain even powers of $h$. 
\end{remark}

\begin{proposition}\label{prop-Ldisc-orders}
If the discrete Lagrangian $L_\disc$ is a consistent discretization of some $\cL(x,\dot{x})$, then the $h^k$-term of $\cL_\disc$ depends on $x, \dot{x}, \ldots, x^{(k+1)}$, but not on higher derivatives of $x$. 
\end{proposition}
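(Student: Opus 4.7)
The plan is to reduce the problem to the consistency expansion of $L_\disc$ applied to a shifted curve, and then bound derivative orders via a direct Taylor expansion.

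First I apply the consistency condition (Definition \ref{def-consistency}(a)) to the curve $y(s) = x(s + t - h/2)$ evaluated at $s = 0$, which rewrites
\[ \cL_\disc([x(t)], h) = \cL(x(t-h/2), \dot{x}(t-h/2)) + \sum_{i \geq 1} h^i \tilde{g}_i[x(t-h/2)], \]
where the $\tilde{g}_i$ are the coefficient functions furnished by the consistency expansion.

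The main auxiliary step is to show that $\tilde{g}_i[y(s)]$ depends at most on $y(s), \dot{y}(s), \ldots, y^{(i+1)}(s)$. I would prove this by comparing the consistency expansions on two smooth curves $y_1, y_2$ that agree at $s$ through order $i+1$, so that $y_1(s+h) - y_2(s+h) = \O(h^{i+2})$. The difference of the left-hand sides is then $\partial_v L_\disc \cdot \O(h^{i+2})$ plus higher-order terms, while the difference of the right-hand sides equals $\sum_k h^k \bigl(\tilde{g}_k[y_1] - \tilde{g}_k[y_2]\bigr)$. The key structural ingredient is the bound $\partial_v L_\disc = \O(h^{-1})$, which I would extract from the consistency expansion itself by perturbing along a linear curve $\phi(\tau) = \tau$ and differentiating in the perturbation parameter; analogous estimates $\partial_v^n L_\disc = \O(h^{-n})$ handle the contributions of higher-order Taylor terms. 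Combined, these yield $\tilde{g}_k[y_1] = \tilde{g}_k[y_2]$ for all $k \leq i$, which establishes the derivative-order bound.

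With this bound in hand, I Taylor expand each argument as $x^{(j)}(t-h/2) = \sum_m \frac{(-h/2)^m}{m!} x^{(j+m)}(t)$, so that the $h^m$-contribution from the $j$-th argument involves $x^{(j+m)}(t)$. The $h^{k-i}$-coefficient of $\tilde{g}_i[x(t-h/2)]$ then distributes $k-i$ units of $h$-power among the $i+2$ arguments $x^{(0)}, \ldots, x^{(i+1)}$, and its maximum derivative order is $(i+1) + (k-i) = k+1$, achieved by concentrating all the weight on the highest-indexed argument. The prefactor term $\cL(x(t-h/2), \dot{x}(t-h/2))$ obeys the same bound via its $\dot{x}$-argument.

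The hardest part is establishing the derivative-order bound on $\tilde{g}_i$; once that structural fact is in place, the remainder is straightforward bookkeeping.
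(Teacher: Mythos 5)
Your proof is correct, but it takes a genuinely different and considerably longer route than the paper's. The paper's argument is a two-line change of variables: writing the two arguments of $L_\disc$ as $y \mp \frac{h}{2}z$ with $y = \frac{1}{2}\big(x\big(t-\frac{h}{2}\big)+x\big(t+\frac{h}{2}\big)\big) = x + \frac{h^2}{8}\ddot{x}+\ldots$ and $z = \frac{1}{h}\big(x\big(t+\frac{h}{2}\big)-x\big(t-\frac{h}{2}\big)\big) = \dot{x}+\frac{h^2}{24}x^{(3)}+\ldots$, it observes that consistency forces $\cL_\disc = \cL(y,z) + h g_1(y,z) + h^2 g_2(y,z)+\ldots$ with coefficients depending on the pair $(y,z)$ only; since the $h^m$-coefficient of $y$ involves $x^{(m)}$ and that of $z$ involves $x^{(m+1)}$, the $h^k$-term of the composition involves at most $x^{(k+1)}$. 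You instead expand around $t-\frac{h}{2}$ and prove the key structural fact --- that the coefficient $\tilde{g}_i$ of the consistency expansion depends only on the $(i+1)$-jet of the curve --- by comparing two curves agreeing to order $i+1$ and invoking the bounds $\D_2^n L_\disc = \O(h^{-n})$, which are indeed available by the same differentiate-the-expansion device used in the proof of Proposition \ref{prop-consistency}. What your approach buys is an explicit justification of the jet-dependence of the expansion coefficients, a point the paper's proof essentially asserts when it writes the coefficients as functions of $(y,z)$ alone; what it costs is the auxiliary derivative estimates, which you should verify not just on a smooth curve but at the intermediate points of the Taylor remainder (e.g.\@ by interpolating a smooth curve through the perturbed endpoint). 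The final bookkeeping is the same in both proofs: concentrating the remaining $k-i$ powers of $h$ on the highest-order slot yields $x^{(k+1)}$ at worst.
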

\begin{proof}
Let $y = x + \frac{h^2}{8} \ddot{x} + \ldots$ and $z = \dot{x} + \frac{h^2}{24} x^{(3)} + \ldots$. Then the power series expansion of $\cL_\disc([x],h)$ takes the form
\begin{align*}
\cL_\disc([x],h) 
&= L_\disc \!\left( y-\frac{h}{2}z, y+\frac{h}{2}z ,h \right)  \\
&= \cL(y,z) + h g_1(y,z) + h^2 g_2(y,z) + \ldots. \qedhere
\end{align*}
\end{proof}

We want to write the discrete action
\[ S_\disc = \sum_{j=1}^n h L_\disc(x(jh-h),x(jh),h) = \sum_{j=1}^n h \cL_\disc\!\left(\left[x \!\left( jh - \tfrac{h}{2}\right)\right], h \right) \] 
as an integral. To do this we require a lemma.

\begin{lemma}\label{lemma-EM}
For any smooth function $f: \mathbb{R} \rightarrow \mathbb{R}^N$ we have
\begin{align*}
\sum_{j=1}^n h f \!\left( jh - \frac{h}{2} \right) 
&\simeq \int_0^{nh} \left( \sum_{i=0}^\infty h^{2i} \left(2^{1-2i}-1\right) \frac{B_{2i}}{(2i)!} f^{(2i)}(t) \right) \d t ,
\end{align*} 
where $B_i$ are the Bernoulli numbers. The symbol $\simeq$ denotes an asymptotic expansion for $h \rightarrow 0$. In general, the power series in the right hand side does not converge.
\end{lemma}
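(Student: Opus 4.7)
The first observation is that, since $\int_0^{nh} f^{(2i)}(t)\,dt = f^{(2i-1)}(nh) - f^{(2i-1)}(0)$ for each $i \geq 1$, the claim is equivalent to the midpoint-rule Euler--Maclaurin expansion
\begin{equation*}
h \sum_{j=1}^n f\!\left(jh-\tfrac{h}{2}\right) \simeq \int_0^{nh} f(t)\,dt + \sum_{i=1}^\infty h^{2i}(2^{1-2i}-1)\frac{B_{2i}}{(2i)!}\bigl[f^{(2i-1)}(nh)-f^{(2i-1)}(0)\bigr],
\end{equation*}
which will be the main target.

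The cleanest route is a formal operator calculus with $D = d/dt$. I would look for a formal power series $G(x) \in \R[[x]]$ such that, setting $g := G(hD)f$, the cellwise identity $\int_{t-h}^{t} g(s)\,ds = h f(t-h/2)$ holds order by order in $h$. Summing this identity over $t = h, 2h, \ldots, nh$ telescopes to $\int_0^{nh} g(s)\,ds = h \sum_{j=1}^{n} f(jh - h/2)$, and expanding $G(hD)f$ in powers of $h$ then produces the asserted expansion. The cellwise identity is equivalent to the operator equation $\frac{1-e^{-hD}}{D}\,G(hD) = h\,e^{-hD/2}$, solved uniquely by
\begin{equation*}
G(x) = \frac{x/2}{\sinh(x/2)}.
\end{equation*}

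It then remains to identify the Taylor coefficients of $G$. Starting from the classical $y\coth y = \sum_{i \geq 0} \frac{2^{2i} B_{2i}}{(2i)!}\,y^{2i}$ together with the elementary identity $\mathrm{csch}(y) = \coth(y/2) - \coth(y)$ (a direct consequence of the $\sinh$ double-angle formula), I would derive $y\,\mathrm{csch}(y) = 2(y/2)\coth(y/2) - y\coth y$; substituting $y = x/2$ yields the coefficient of $x^{2i}$ in $G(x)$ as $(2^{1-2i}-1)\frac{B_{2i}}{(2i)!}$, exactly matching the stated series.

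The only real subtlety is interpretational: $G(hD)f$ is typically a divergent series in $h$, so all equalities above must be read order by order. Concretely, truncating $G$ after degree $2k$ turns the cellwise identity into $\int_{t-h}^{t} g(s)\,ds = h f(t-h/2) + O(h^{2k+3})$ by Taylor's theorem on the smooth function $f$, and telescoping gives a global error $O(h^{2k+2})$ on $[0,nh]$. The main obstacle is therefore not any single computation but making the formal operator manipulation consistent with the asymptotic meaning of $\simeq$ used in the paper --- which is precisely what prevents the statement from being an exact equality.
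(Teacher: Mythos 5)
Your argument is correct, and it takes a genuinely different route from the paper. The paper starts from the classical endpoint Euler--Maclaurin formula (cited from Abramowitz--Stegun), applies it once with $2n$ sample points and once with the argument of $g$ doubled, and subtracts the two to isolate the sum over midpoints; it then converts the boundary terms $g^{(2i-1)}(2n)-g^{(2i-1)}(0)$ into integrals of $g^{(2i)}$ and rescales to recover the stated form. You instead derive the midpoint rule from scratch by an operator-calculus ansatz: the kernel $G(hD)=\frac{hD/2}{\sinh(hD/2)}$ is forced by the cellwise identity, the cells concatenate to give the global statement, and the coefficients $(2^{1-2i}-1)\frac{B_{2i}}{(2i)!}$ come out of the generating function $y\coth y=\sum_i \frac{2^{2i}B_{2i}}{(2i)!}y^{2i}$ via $\operatorname{csch}(y)=\coth(y/2)-\coth(y)$ (all of which I have checked; note as a sanity check that your $i=1,2$ coefficients $-\frac{1}{24}$ and $\frac{7}{5760}$ match the explicit terms of $\cL_\mesh$ in the paper). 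What each approach buys: the paper's derivation is shorter but leans on the full Euler--Maclaurin formula as a black box, whereas yours is self-contained modulo the standard Bernoulli generating function and, moreover, directly produces the integrand form $\sum_i h^{2i}c_i f^{(2i)}$ of the lemma rather than passing through boundary terms. Your closing remarks on interpreting the operator identity order by order, with cellwise error $\O(h^{2k+3})$ and global error $\O(h^{2k+2})$ after truncation, supply exactly the rigor that the symbol $\simeq$ requires and that the paper leaves implicit.
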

\begin{remark}
The first few terms can easily be obtained by Taylor expansion. We have
\begin{align*}
\int_0^h f(t) \,\d t &= \int_0^h f\!\left(\tfrac{h}{2}\right) + \left(t-\tfrac{h}{2}\right) f'\!\left(\tfrac{h}{2}\right) + \frac{1}{2}\left(t-\tfrac{h}{2}\right)^2 f''\!\left(\tfrac{h}{2}\right) + \O(t^3) \,\d t \\
&= h f\!\left(\tfrac{h}{2}\right) + \frac{h^3}{24} f''\!\left(\tfrac{h}{2}\right)+ \O(h^4) \\
&= h f\!\left(\tfrac{h}{2}\right) + \int_0^h \frac{h^2}{24} f''\!\left(\tfrac{h}{2}\right) \d t + \O(h^4)  \\
&= h f\!\left(\tfrac{h}{2}\right) + \int_0^h  \frac{h^2}{24} f''(t) \,\d t + \O(h^4) ,
\end{align*}
which gives the result up to order 2 after summation:
\[ \sum_{j=1}^n h f \!\left( jh - \frac{h}{2} \right) 
= \int_0^{nh} f(t) - \frac{h^2}{24} f''(t) \,\d t + \O(nh^4). \]
We could prove the general statement by an iteration of this procedure, but here we give a shorter albeit slightly less elementary proof.
\end{remark}
\begin{proof}[Proof of Lemma \ref{lemma-EM}]
The Euler-Maclaurin formula \cite[Section 23.1]{abramowitz1972handbook} gives the following asymptotic expansion: 
\[ \sum_{j=1}^{n-1} g(j) 
\simeq \int_0^{n} g(t) \,\d t - \frac{1}{2}(g(0)+g(n)) + \sum_{i=1}^\infty \frac{B_{2i}}{(2i)!} \!\left( g^{(2i-1)}(n)-g^{(2i-1)}(0) \right) \]
for any smooth function $g: \mathbb{R} \rightarrow \mathbb{R}^N$. If we double $n$ in this formula, we get
\[ \sum_{j=1}^{2n-1} g(j) 
\simeq \int_0^{2n} g(t) \,\d t - \frac{1}{2}(g(0)+g(2n)) + \sum_{i=1}^\infty \frac{B_{2i}}{(2i)!} \!\left( g^{(2i-1)}(2n)-g^{(2i-1)}(0) \right) \!. \]
If we double the the argument of $g$ instead, we get
\begin{align*}
\sum_{j=1}^{n-1} g(2j) 
\simeq \int_0^n g(2t) \,\d t &- \frac{1}{2}(g(0)+g(2n)) + \sum_{i=1}^\infty 2^{2i-1} \frac{B_{2i}}{(2i)!} \left( g^{(2i-1)}(2n)-g^{(2i-1)}(0) \right) \!. 
\end{align*}
Taking the difference yields
\[ \sum_{j=1}^{n} g(2j-1) 
\simeq \int_0^n g(2t) \,\d t + \sum_{i=1}^\infty \left(1-2^{2i-1}\right) \frac{B_{2i}}{(2i)!} \left( g^{(2i-1)}(2n)-g^{(2i-1)}(0) \right) \!, \]
hence 
\[ \sum_{j=1}^{n} g(2j-1) \simeq \int_0^n \left( g(2t) + \sum_{i=1}^\infty \left(2-2^{2i}\right) \frac{B_{2i}}{(2i)!} g^{(2i)}(2t) \right)\! \d t. \]
Now set $f(t) = g\left(\frac{2}{h}t \right)$. Then
\[\sum_{j=1}^{n} f \left(hj-\frac{h}{2}\right) 
\simeq \int_0^n \left( f(ht) + \sum_{i=1}^\infty \left(2-2^{2i}\right) \frac{B_{2i}}{(2i)!} \left(\frac{h}{2}\right)^{2i} f^{(2i)}(ht) \right)\! \d t,\]
which is equivalent to the claimed result. 
\end{proof}

\begin{definition}
We call the formal power series
\begin{align*}
\cL_\mesh([x(t)],h)
&:= \sum_{i=0}^\infty \left(2^{1-2i}-1\right) \frac{h^{2i} B_{2i}}{(2i)!} \frac{\d^{2i}}{\d t^{2i}} \cL_\disc([x(t)],h) \\
&= \cL_\disc([x(t)],h) - \frac{h^2}{24} \frac{\d^2}{\d t^2} \cL_\disc([x(t)],h) + \frac{7h^4}{5760} \frac{\d^4}{\d t^4} \cL_\disc([x(t)],h) + \ldots
\end{align*}
the \emph{meshed modified Lagrangian} of $L_\disc$. 
\end{definition}

Note that the higher order terms of the meshed modified Lagrangian do not contribute to the Euler-Lagrange equations because they are time derivatives. However, they do contribute to the natural interior conditions. Furthermore they are needed to have (formal) equality between the discrete and the meshed modified action,
\[ S_\disc((x(jh))_j,h) = \sum_j h \cL_\disc\!\left( \left[x \left( jh - \tfrac{h}{2}\right)\right] ,h \right) \simeq \int_0^{nh} \cL_\mesh([x(t)],h) \,\d t \]
for any smooth curve $x$. This implies that if $x$ is a curve such that $(x(t_0+jh))_j$ is critical for the discrete action, then $x$ formally solves the meshed variational problem for $\cL_\mesh$.

The rest of this section is devoted to constructing a classical, first-order Lagrangian $\cL_\mod: T \R^N \rightarrow \R$ for the modified equation. From this point on our construction differs significantly from the one presented in \cite{oliver2012new}. First we have to do some analysis.

\subsection{Properties of admissible families of curves}

Recall from Definition \ref{def-admissible} that a family of curves is called admissible if their derivatives of any order are bounded as $h \rightarrow 0$. An admissible family of real valued curves (i.e.\@ with $N = 1$) is called an admissible family of functions. In particular, for a family of Lagrangians $(\cL_h)_h$ that is given by a power series in $h$ and an admissible family of curves $(x_h)_h$, the compositions $\cL_h[x_h]$ form an admissible family of functions.

\begin{lemma}
If $(x_h)_h$ is an admissible family of curves, then for every $k \in \mathbb{N}$ the family of derivatives $\big( x_h^{(k)} \big)_h$ is admissible as well.
\end{lemma}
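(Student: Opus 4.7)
The statement is essentially immediate from unpacking the definition, so the plan is short. Admissibility of $(x_h)_h$ gives, for some $h_{\max}>0$, a uniform bound on $\|x_h^{(m)}\|_\infty$ for every nonnegative integer $m$ as $h$ ranges over $(0,h_{\max}]$. To prove admissibility of $(x_h^{(k)})_h$, I need to check the same condition for the family of curves $y_h := x_h^{(k)}$, namely that $\|y_h^{(j)}\|_\infty$ is bounded in $h$ for every $j \geq 0$.

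The key step is the trivial observation that $y_h^{(j)} = x_h^{(k+j)}$, together with the fact that each $y_h$ is defined on the same interval $[a_h,b_h]$ as $x_h$ (so the supremum norm is computed over the same domain). Applying the admissibility of $(x_h)_h$ with the index $m = k+j$, we get a bound on $\|x_h^{(k+j)}\|_\infty = \|y_h^{(j)}\|_\infty$, which is what we need. The same $h_{\max}$ works.

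There is essentially no obstacle here; the lemma is really a remark that the notion of admissibility is closed under differentiation, which follows because the definition already quantifies over all orders of derivative. The only thing worth emphasizing in the write-up is that smoothness of $x_h$ ensures $y_h = x_h^{(k)}$ is itself a smooth curve, so the family $(y_h)_h$ fits the setup of Definition \ref{def-admissible}.
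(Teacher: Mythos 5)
Your proof is correct and matches the paper's, which simply states that the claim ``follows immediately from the definition of admissibility''; your write-up just makes explicit the observation that $\big(x_h^{(k)}\big)^{(j)} = x_h^{(k+j)}$ and that the definition already bounds derivatives of every order. Nothing further is needed.
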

\begin{proof}
This follows immediately from the definition of admissibility. 
\end{proof}

\begin{lemma}\label{lemma-admissible1}
Let $(f_h)_{h \in \R_{>0}}$ be an admissible family of functions on the same domain $[a,b]$ and let $(h_k)_k$ be a sequence with $h_k \rightarrow 0$. If $\lim_{k \rightarrow \infty} \| f_{h_k} \|_{\infty} = 0$, then $\lim_{k \rightarrow \infty} \| f_{h_k}' \|_{\infty} = 0$. (And hence $\lim_{k \rightarrow \infty} \big\| f_{h_k}^{(n)} \big\|_{\infty} = 0$ for all $n$.)
\end{lemma}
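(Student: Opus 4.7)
The plan is to reduce the statement to a classical Landau--Kolmogorov style interpolation inequality: if a $C^2$ function on a bounded interval is uniformly small and its second derivative is uniformly bounded, then its first derivative must also be small. Admissibility of $(f_h)_h$ gives exactly the bound on higher derivatives that this interpolation argument needs.

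First I would prove the key inequality: for any $f \in C^2([a,b])$ and any $\delta \in (0, b-a]$,
\[
\|f'\|_\infty \;\leq\; \frac{2}{\delta}\,\|f\|_\infty \;+\; \frac{\delta}{2}\,\|f''\|_\infty .
\]
Fix $t \in [a,b]$. Since $\delta \leq b-a$, at least one of $t+\delta$ or $t-\delta$ lies in $[a,b]$; call it $t \pm \delta$. Taylor's theorem with remainder gives $f(t\pm\delta) = f(t) \pm \delta f'(t) + \tfrac{\delta^2}{2} f''(\xi)$ for some $\xi$ between $t$ and $t\pm\delta$. Solving for $f'(t)$ and taking absolute values yields $|f'(t)| \leq \tfrac{2}{\delta}\|f\|_\infty + \tfrac{\delta}{2}\|f''\|_\infty$, and the supremum bound follows.

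Next I would apply this to the family $(f_{h_k})_k$. By admissibility there is a constant $M$ with $\|f_{h_k}''\|_\infty \leq M$ for all sufficiently small $h_k$. Given $\varepsilon > 0$, pick $\delta = \min\!\bigl(\varepsilon/M,\, b-a\bigr)$, so that $\tfrac{\delta}{2}\|f_{h_k}''\|_\infty \leq \varepsilon/2$. Since $\|f_{h_k}\|_\infty \to 0$, for $k$ large enough we also have $\tfrac{2}{\delta}\|f_{h_k}\|_\infty \leq \varepsilon/2$, and combining the two yields $\|f_{h_k}'\|_\infty \leq \varepsilon$. Hence $\|f_{h_k}'\|_\infty \to 0$.

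For the parenthetical claim about all higher derivatives, I would proceed by induction on $n$. The previous lemma in the paper ensures that $\bigl(f_{h_k}^{(n)}\bigr)_k$ is again an admissible family of functions, and its second derivative $f_{h_k}^{(n+2)}$ is uniformly bounded by admissibility of the original family. Thus, once we know $\|f_{h_k}^{(n-1)}\|_\infty \to 0$, applying the statement just proved to $f_{h_k}^{(n-1)}$ gives $\|f_{h_k}^{(n)}\|_\infty \to 0$, completing the induction from the base case $n=0$ (the hypothesis) and $n=1$ (proved above).

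The only subtle point — and the one I expect to be the main bookkeeping obstacle — is ensuring that the shift $\delta$ can always be chosen inside $[a,b]$; this is handled by using either $t+\delta$ or $t-\delta$ and by capping $\delta$ at $b-a$, which is why the hypothesis that the $f_h$ share a common bounded domain is essential. Everything else is a direct application of Taylor's theorem combined with the admissibility bound.
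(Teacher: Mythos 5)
Your argument is correct in substance but takes a cleaner, more direct route than the paper. The paper proves the lemma by contradiction: assuming $|f_{h_k}'(t_k)| > \varepsilon$ at some points $t_k$, it constructs nearby points where $|f_{h_k}''|$ exceeds $\varepsilon^{1-j}$ for arbitrarily large $j$, contradicting the uniform bound on second derivatives that admissibility provides. You instead make the same underlying mechanism quantitative by invoking the Landau--Kolmogorov interpolation inequality $\|f'\|_\infty \leq \frac{2}{\delta}\|f\|_\infty + \frac{\delta}{2}\|f''\|_\infty$ and then optimizing over $\delta$. Both proofs use exactly the same input (uniform boundedness of $\|f_h''\|_\infty$), but your version yields an explicit rate --- $\|f_{h_k}'\|_\infty = \O\bigl(\|f_{h_k}\|_\infty^{1/2}\bigr)$ once $\delta$ is optimized --- whereas the paper's contradiction argument gives only the qualitative conclusion. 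Your treatment of the higher derivatives by induction, using the fact that derivatives of admissible families are again admissible, matches what the paper leaves implicit in its parenthetical remark.

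One small repair is needed in your key inequality: the claim that for $\delta \leq b-a$ at least one of $t+\delta$, $t-\delta$ lies in $[a,b]$ is false (take $[a,b]=[0,1]$, $\delta = 0.8$, $t=0.5$). The correct condition is $\delta \leq \frac{1}{2}(b-a)$: if both $t-\delta < a$ and $t+\delta > b$, then $b-a < 2\delta$. Capping $\delta$ at $\frac{1}{2}(b-a)$ instead of $b-a$ fixes this and changes nothing downstream, since $\delta$ is in any case a fixed positive number independent of $k$. You should also note that Taylor's theorem with the Lagrange form of the remainder requires real-valued functions, which is consistent with the paper's convention that an admissible family of \emph{functions} means $N=1$; for vector-valued curves one would use the integral form of the remainder or argue componentwise.
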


\begin{proof}
Suppose towards a contradiction that there exists an $\varepsilon > 0$ and a sequence $(t_k)_k$ such that $|f_{h_k}'(t_k)| > \varepsilon$. Without loss of generality we can assume $\varepsilon < 1$. Since  $\lim_{k \rightarrow \infty} \| f_{h_k} \|_{\infty} = 0$, for every $j \in \mathbb{N}$ we can find a $k \in \mathbb{N}$ such that for all $\ell \geq k$ there holds $\|f_{h_\ell}\|_{\infty} < \frac{1}{8} \varepsilon^{j+1}$.

We claim that for every $\ell \geq k$ there exists an $s_\ell \in \left[ t_\ell - \frac{1}{2}\varepsilon^j, t_\ell \right]$ such that \linebreak$\big| f_{h_\ell}'(t_\ell) - f_{h_\ell}'(s_\ell) \big| \geq \frac{\varepsilon}{2}$. Indeed, if this were not the case there would hold that
\begin{align*}
\left| f_{h_\ell}(t_\ell) - f_{h_\ell}\left(t_\ell - \tfrac{1}{2}\varepsilon^j \right) \right| 
&\geq \frac{\varepsilon^j}{2} \inf_{t \in [ t_\ell - \frac{1}{2}\varepsilon^j, t_\ell ]} \left| f_{h_\ell}'(t) \right| \\
&> \frac{\varepsilon^j}{2}\left(f_{h_\ell}'(t_\ell) - \frac{\varepsilon}{2}\right) > \frac{\varepsilon^{j+1}}{4},
\end{align*}
which contradicts the fact that $\|f_{h_\ell}\|_{\infty} < \frac{1}{8} \varepsilon^{j+1}$.

Since such an $s_\ell$ exists, we can find an $r_\ell \in [s_\ell, t_\ell] \subset \left[ t_\ell - \frac{1}{2}\varepsilon^j, t_\ell \right]$ such that $| f_{h_\ell}''(r_\ell) | > \frac{\varepsilon}{2} \big/ \frac{\varepsilon^j}{2} = \varepsilon^{1-j}$. It follows that $\limsup_{\ell \rightarrow \infty} \| f_{h_\ell}'' \|_{\infty} \geq \lim_{j \rightarrow \infty} \varepsilon^{1-j} = \infty$, which contradicts the assumption that $(f_h)_h$ is admissible. 
\end{proof}

\begin{lemma}\label{lemma-admissible2}
\begin{enumerate}[$(a)$]
\item Let $(f_h)_{h \in \R_{>0}}$ be an admissible family of functions on the same domain $[a,b]$. If $\| f_h \|_\infty = \O(h^\ell)$, then for all $k$ there holds $\big\| f_h^{(k)} \big\|_\infty = \O(h^\ell)$.

\item Let $(f_h)_{h \in \R_{>0}}$ be an admissible family of functions on a shrinking domain $[a_h,b_h] = [a_h,a_h+h]$. If $\| f_h \|_\infty = \O(h^\ell)$, then for all $k$ there holds $\big\| f_h^{(k)} \big\|_\infty = \O(h^{\ell-k})$.
\end{enumerate}

\end{lemma}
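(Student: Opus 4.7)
The plan is to deduce both parts from a single Taylor / finite-difference estimate. For fixed $t$ and step $\delta > 0$, Taylor's theorem applied at several ordinates $t + j\delta$ together with a Vandermonde inversion of the resulting linear system gives the standard identity
\[
f_h^{(k)}(t) = \delta^{-k} \sum_j c_j\, f_h(t + j\delta) + R_n, \qquad |R_n| \leq C_n\, \delta^{n-k}\, \|f_h^{(n)}\|_\infty,
\]
for any $n > k$, with universal coefficients $c_j$ depending only on $n$ and $k$. Substituting $\|f_h\|_\infty = \O(h^\ell)$ in the main term and the admissibility bound $\|f_h^{(n)}\|_\infty = \O(1)$ in the remainder produces $|f_h^{(k)}(t)| \leq C\, h^\ell \delta^{-k} + C' \delta^{n-k}$.

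For part $(b)$, the domain length $h$ itself sets the natural scale. I would take $\delta$ of order $h$, small enough that the nodes $t + j\delta$ all fit inside whichever of the half-intervals $[t, a_h+h]$, $[a_h, t]$ is the longer (which always has length at least $h/2$). The two terms then become $\O(h^{\ell-k})$ and $\O(h^{n-k})$, so any fixed $n \geq \ell$ absorbs the remainder into the main term and yields the claimed $\O(h^{\ell-k})$ bound.

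For part $(a)$ the step $\delta$ is only constrained by $(b-a)/2$, so one is free to balance the two contributions at $\delta = h^{\ell/n}$, which produces $|f_h^{(k)}(t)| = \O(h^{\ell(n-k)/n})$ with an exponent tending to $\ell$ as $n \to \infty$. A conceptually cleaner packaging is to apply the simplest ($n=2$) version of the estimate, namely $|f_h'(t)| \leq 2\|f_h\|_\infty/\delta + \delta\|f_h''\|_\infty/2$, and optimize in $\delta$ to obtain the Landau--Kolmogorov inequality $\|f_h^{(k)}\|_\infty^2 \leq 4\, \|f_h^{(k-1)}\|_\infty\, \|f_h^{(k+1)}\|_\infty$. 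Setting $a_k := \liminf_{h \to 0} (\log \|f_h^{(k)}\|_\infty)/\log h$, this forces the sequence $(a_k)_{k \geq 0}$ to be concave, while admissibility gives $a_k \geq 0$ and the hypothesis gives $a_0 \geq \ell$. A concave non-negative sequence starting at $\ell$ cannot admit a strictly negative first difference, since the differences being non-increasing would then drive the sequence to $-\infty$; hence it is non-decreasing and $a_k \geq \ell$ for every $k$.

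The hard part will be upgrading the conclusion $\|f_h^{(k)}\|_\infty = \O(h^{\ell - \varepsilon})$ (for every $\varepsilon > 0$), which is all that either approach gives immediately, to exactly $\O(h^\ell)$ with a uniform constant. I expect this to follow either by carefully bookkeeping the $n$-dependent multiplicative constants in the finite-difference formula and checking they do not grow when $\delta$ is optimally chosen, or by a direct quantitative contradiction in the spirit of the previous lemma: pick a sequence $h_j \to 0$ along which $\|f_{h_j}^{(k)}\|_\infty / h_j^\ell \to \infty$ and derive a contradiction with the uniform admissibility bound on $\|f_h^{(k+1)}\|_\infty$ by tracking how long $|f_{h_j}^{(k)}|$ stays large and integrating twice.
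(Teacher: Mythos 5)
Your treatment of part $(b)$ is correct and complete, and it takes a genuinely different route from the paper: the paper deduces $(b)$ from $(a)$ by rescaling the shrinking domain to $[0,1]$ via $g_h(t)=f_h(a_h+ht)$, whereas you prove $(b)$ directly from a finite-difference representation of $f_h^{(k)}$ with step $\delta\sim h$, balancing the main term $\O(h^\ell\delta^{-k})$ against a remainder $\O(\delta^{n-k})$ controlled by admissibility. Your version is self-contained and fully quantitative, and — importantly — it does not route through part $(a)$ at all.

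For part $(a)$ there is a genuine gap, and you have located it exactly: both of your arguments (optimizing $\delta$ in the finite-difference formula, and the Landau--Kolmogorov/concavity argument for the exponents $a_k$) deliver only $\|f_h^{(k)}\|_\infty=\O(h^{\ell-\varepsilon})$ for every $\varepsilon>0$, and the upgrade to $\O(h^\ell)$ is left as an expectation. Neither of your proposed repairs can succeed, because the statement of $(a)$ is false as written. Take $f_h(t)=h^\ell\sin\big(t\log(1/h)\big)$ on a fixed interval $[a,b]$: then $\big\|f_h^{(k)}\big\|_\infty=h^\ell\big(\log(1/h)\big)^k$ is bounded in $h$ for each fixed $k$, so the family is admissible, and $\|f_h\|_\infty=h^\ell$, yet $\|f_h'\|_\infty=h^\ell\log(1/h)$ is not $\O(h^\ell)$. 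The same example exposes the weak point of the paper's own proof of $(a)$, which applies Lemma \ref{lemma-admissible1} to the renormalized family $f_h/\|f_h'\|_\infty$; that family need not be admissible (here its second derivatives grow like $\log(1/h)$), so the lemma does not apply. The honest conclusion for $(a)$ is exactly your intermediate one, $\O(h^{\ell-\varepsilon})$ for all $\varepsilon>0$ (equivalently $a_k\geq\ell$ in your notation), and downstream uses of $(a)$ such as Lemma \ref{lemma-bootstrap} need to be re-examined against this weaker form; part $(b)$, whose conclusion carries an extra factor $h^{-k}$ of slack that absorbs such logarithmic losses, remains true, and your direct proof of it is the right way to obtain it.
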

\begin{proof}
\begin{enumerate}[$(a)$]
\item Since the derivatives of an admissible family of functions form an admissible family, it is sufficient to show this for $k=1$.

Assume towards a contradiction that $\| f_h' \|_\infty$ is not $\O(h^\ell)$. Then there exists a sequence $(h_j)_j$ with $h_j \rightarrow 0$ such that $\|f_{h_j}'\|_{\infty} > j h^\ell$. Hence 
\[ \lim_{j \rightarrow \infty} \left\| \frac{f_{h_j}}{\|f_{h_j}'\|_{\infty}} \right\|_{\infty}= 0 . \] 
But then a contradiction follows from Lemma \ref{lemma-admissible1}, applied to the family $\left(f_h / \|f'_h\|_\infty \right)_h$:
\[ 1 = \lim_{j \rightarrow \infty} \left\| \frac{f'_{h_j}}{\|f_{h_j}'\|_{\infty}} \right\|_{\infty}  = 0 . \] 

\item Consider the functions $g_h: [0,1] \rightarrow \R^N$ defined by rescaling $f_h$: 
\[ g_h(t) = f_h(a_h+ht). \]
Then $\big\|g_h^{(k)}\big\|_\infty = h^k \big\|f_h^{(k)}\big\|_\infty$, so the $g_h$ form an admissible family. Hence from part $(a)$ it follows that $h^k \big\|f_h^{(k)}\big\|_\infty = \O(h^\ell)$. 
\end{enumerate}
\end{proof}

\begin{lemma}\label{lemma-bootstrap}
Let $(f_h)_{h \in \R_{>0}}$ be an admissible family of functions with the same domain $[a,b]$. 
\begin{enumerate}[$(a)$]
\item If $\sup_t \left| f_h(t) + f_h(t+h) \right| = \O(h^\ell)$, then $\| f_h \|_\infty = \O(h^\ell)$.

\item If $\sup_t \left| \int_t^{t+h} f_h(s) \,\d s \right| = \O(h^{\ell+1})$, then $\| f_h \|_\infty = \O(h^\ell)$.
\end{enumerate}

\end{lemma}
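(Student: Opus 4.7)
The plan is a bootstrap on the exponent of $h$, treating (a) and (b) in parallel. I begin from the trivial bound $\|f_h\|_\infty = \O(1)$ on $[a,b]$ provided by admissibility, and at each inductive stage use Lemma \ref{lemma-admissible2}$(a)$ to upgrade a current bound $\|f_h\|_\infty = \O(h^k)$ on $[a,b]$ to the derivative bound $\|f_h'\|_\infty = \O(h^k)$ on $[a,b]$. I then combine this derivative estimate with the hypothesis to improve the exponent on $f_h$ itself by one (capped at $\ell$). After $\lceil \ell \rceil$ such steps the exponent stabilizes at $\ell$.

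For part (a), the key identity at each inductive step is
\[
2 f_h(t) = \bigl(f_h(t) + f_h(t+h)\bigr) + \bigl(f_h(t) - f_h(t+h)\bigr),
\]
valid for $t \in [a, b-h]$. The first summand is $\O(h^\ell)$ by hypothesis and the second is bounded by $h\|f_h'\|_\infty = \O(h^{k+1})$ via the mean value theorem, so $|f_h(t)| = \O(h^{\min(\ell, k+1)})$ on $[a, b-h]$. For part (b), I use the analogous decomposition
\[
h f_h(t) = \int_t^{t+h} f_h(s) \,\d s - \int_t^{t+h} \bigl(f_h(s) - f_h(t)\bigr) \d s,
\]
whose first term is $\O(h^{\ell+1})$ by hypothesis and whose second is bounded by $\tfrac{h^2}{2}\|f_h'\|_\infty = \O(h^{k+2})$; dividing by $h$ yields the same conclusion $|f_h(t)| = \O(h^{\min(\ell, k+1)})$ on $[a, b-h]$.

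The step I expect to be most delicate is the domain bookkeeping at the boundary segment $[b-h, b]$, where the hypothesis no longer applies. Lemma \ref{lemma-admissible2}$(a)$ requires the inductive hypothesis on a \emph{fixed} domain rather than on the shrinking $[a, b-h]$, so each inductive step must close on the full interval $[a,b]$ in order to be iterated. The remedy is to integrate the derivative bound on the short segment: for $t \in [b-h, b]$,
\[
|f_h(t)| \le |f_h(b-h)| + h\|f_h'\|_\infty = \O(h^{\min(\ell, k+1)}) + \O(h^{k+1}) = \O(h^{\min(\ell, k+1)}),
\]
so the improvement extends from $[a, b-h]$ to $[a, b]$ with no loss of exponent, and the bootstrap proceeds cleanly to the target $\O(h^\ell)$.
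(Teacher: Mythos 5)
Your proof is correct. For part (a) it is essentially the paper's argument reorganized: the paper runs an induction on $\ell$, using the same two ingredients (the hypothesis on $f_h(t)+f_h(t+h)$ and the mean-value bound $\sup_t|f_h(t+h)-f_h(t)| \le h\|f_h'\|_\infty$) to first establish $\|f_h\|_\infty = \O(h)$, and then rescales to $\tfrac{1}{h}f_h$ and invokes the case $\ell-1$; your upward bootstrap on the achieved exponent is the same iteration written in the other direction, without the rescaling. For part (b) your route is genuinely different and arguably more elementary: the paper introduces the antiderivative $g_h$, bounds it at the mesh points $a+kh$ by telescoping the hypothesis over $\O(h^{-1})$ intervals, controls the oscillation of $g_h$ between mesh points via the inductive bound on $f_h$, and only then differentiates back using Lemma \ref{lemma-admissible2}$(a)$; your pointwise identity $hf_h(t) = \int_t^{t+h}f_h(s)\,\d s - \int_t^{t+h}\bigl(f_h(s)-f_h(t)\bigr)\d s$ stays entirely local, makes (b) structurally parallel to (a), and avoids the antiderivative and the global summation altogether. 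A further small merit of your write-up is the explicit treatment of the boundary segment $[b-h,b]$, where the hypothesis is silent and where Lemma \ref{lemma-admissible2}$(a)$ needs the bound on a fixed domain to be iterated; the paper's proof passes over this point.
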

\begin{proof}
\begin{enumerate}[$(a)$]

\item We proceed by induction on $\ell$. If $\ell=0$ the claim follows from the definition of admissibility. Assume the statement holds for $\ell-1$. Observe that $\sup_t | f_h(t+h) - f_h(t) | = \O(h)$, so $\sup_t | f_h(t) + f_h(t+h) | = \O(h^\ell)$ implies $\| f_h \|_\infty = \O(h)$, which by Lemma \ref{lemma-admissible2} implies that $\big\| f_h^{(k)}(t) \big\|_\infty = \O(h)$ for every $k$. Therefore $\left( \frac{1}{h}f_h(t) \right)_h$ is an admissible family of functions. Since $\sup_t \left| \frac{1}{h}f_h(t) + \frac{1}{h}f_h(t+h) \right| = \O(h^{\ell-1})$, the induction hypothesis implies that $\left\| \frac{1}{h}f_h \right\|_\infty = \O(h^{\ell-1})$.

\item Again we use induction on $\ell$. And again the claim follows from the definition of admissibility if $\ell=0$. Assume it holds for $\ell-1$ and let $g_h$ be the antiderivative of $f_h$ with $g_h(a) = 0$. Then
\begin{equation}\label{bootstrap-mesh}
\begin{split}
\max_{k} |g_h(a+kh)| &= \max_{k} |g_h(a+kh) - g_h(a)| \\
&= \max_{k} \Bigg| \sum_{j=1}^k \int_{a+(j-1)h}^{a+jh} f_h(s) \,\d s \,\Bigg| = \O(h^\ell),
\end{split}
\end{equation}
where that maximum is taken over all integers $k$ such that $a+kh \in [a,b]$, hence $k = \O(h^{-1})$. By the induction hypothesis we have $\| f_h \|_\infty = \O(h^{\ell-1})$, so
\begin{equation}\label{bootstrap-close}
\sup_{s,t \,:\, |t-s|<h} |g_h(s) - g_h(t)| = \O(h^{\ell}).
\end{equation}
Together, Equations \eqref{bootstrap-mesh} and \eqref{bootstrap-close} imply that $\| g_h \|_\infty = \O(h^{\ell}) $. Since the $g_h$ form an admissible family it follows that $\| f_h \|_\infty = \| g_h' \|_\infty = \O(h^{\ell})$.
\qedhere
\end{enumerate}
\end{proof}

Studying meshed variational problems for admissible families of curves instead of individual piecewise smooth curves is much more subtle. The reason for this is that the higher derivatives of variations on a mesh interval $[t,t+h]$ tend to increase without bound as $h \rightarrow 0$. Such variations take us outside the set of admissible families and are therefore not allowed. The next subsection provides us with a framework to circumvent this.

\subsection{$k$-critical families of curves}

Modified equations generally are nonconvergent power series and so are modified Lagrangians. To make sense of these analytically we need to truncate the power series. It will be useful to allow an unspecified truncation error in the notion of a critical curve.

\begin{definition}
\begin{enumerate}[$(a)$]
\item An admissible family of curves $x_h:[a,b]\rightarrow \R$ is \emph{$k$-critical} for some family of actions $S_h = \int_a^b \cL_h \,\d t$ if for any family of smooth variations $\delta x_h$ there holds $\delta S_h = \O\big( h^{k+1} \, \|\delta x_h\|_1 \big)$. The set of $k$-critical families of curves is denoted by $\cC_k(\cL_h)$.

\item An admissible family of curves $x_h:[a,b]\rightarrow \R$ is \emph{meshed $k$-critical} for some family of actions $S_h = \int_a^b \cL_h \,\d t$ if for any family of piecewise smooth variations $\delta x_h$ of $x_h$, with nonsmooth points in a mesh with size $h$, there holds $\delta S_h = \O\big( h^{k+1} \, \|\delta x_h\|_1 \big)$. The set of meshed $k$-critical families of curves is denoted by $\cC_k^\cM(\cL_h)$.

\item A family of discrete curves $(x_j)_j$ ($h$ omitted to ease notation) is \emph{$k$-critical} for some family of actions $S_\disc = \sum_j L_\disc(x_j,x_{j+1},h)$ if for any family of variations of $(x_j)_j$ there holds $\delta S_\disc = \O\big( h^{k+1} \, \| (\delta x_j)_j \| \big)$, where $\|(\delta x_j)_j\| = \sum_j h |\delta x_j|$.
\end{enumerate}

\smallskip
\noindent In each of definitions above we assume that a full set of boundary conditions is provided and that the variations respect these boundary conditions. 
\end{definition}

\begin{remark}
The scaling of the norm in the discrete case is such that for any smooth variation $\delta x$ there holds $\|\delta x\|_1 = \big(1 + \O(h)\big) \|(\delta x(jh))_j\|$. 
\end{remark}

We can characterize $k$-critical families of curves by a natural relaxation of the usual criticality conditions.

\begin{lemma}\label{lemma-k-crit}
\begin{enumerate}[$(a)$]
\item An admissible family of curves $x_h:[a,b]\rightarrow \R$ is $k$-critical for the family of actions $S_h = \int_a^b \cL_h \,\d t$ if and only if it satisfies the corresponding Euler-Lagrange equations with a defect of order $\O(h^{k+1})$:
\begin{equation}\label{k-crit}
\left\| \frac{\delta \cL_h}{\delta x} \right\|_\infty = \O(h^{k+1}) .
\end{equation}

\item An admissible family of curves $x_h:[a,b]\rightarrow \R$ is meshed $k$-critical for the family of actions $S_h = \int_a^b \cL_h \,\d t$ if and only if it satisfies 
\begin{equation}\label{k-crit-mesh}
\left\| \frac{\delta \cL_h}{\delta x} \right\|_\infty = \O(h^{k+1}) \quad \text{and} \quad \left\|  \der{\cL_h}{x^{(\ell)}} \right\|_\infty = \O(h^{k+\ell+1}) \quad \text{for all } \ell \geq 2. 
\end{equation}

\item A family of discrete curves $(x_j)_j$ is $k$-critical for the family of actions $S_\disc = \sum_j L_\disc(x_j,x_{j+1},h)$ if and only if it satisfies the discrete Euler-Lagrange equations with a defect of order $\O(h^{k+1})$:
\[
\D_2 L_\disc(x_{j-1},x_j,h) + \D_1 L_\disc(x_j,x_{j+1},h) = \O(h^{k+1}). 
\]
\end{enumerate}
\end{lemma}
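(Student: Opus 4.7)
The overall plan is to use integration by parts in each setting to rewrite $\delta S$ (respectively $\delta S_\disc$) as a bilinear pairing between a ``defect'' quantity and the variation. The forward (``if'') direction in each part is then immediate from H\"older's inequality, while the converse (``only if'') direction follows by testing against variations concentrated near any point where the defect is too large. Throughout, admissibility of $(x_h)_h$ together with the power-series structure of $\cL_h$ guarantees that defect quantities $\frac{\delta\cL_h}{\delta x^{(\ell)}}[x_h]$ and their time derivatives form admissible families of functions on $[a,b]$, which is what permits the use of Lemmas \ref{lemma-admissible1} and \ref{lemma-admissible2}.

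Parts (a) and (c) are structurally identical and largely routine. For (a), \eqref{ibp} applied to variations whose derivatives of every order vanish at $a$ and $b$ gives $\delta S_h = \int_a^b \frac{\delta\cL_h}{\delta x}\,\delta x_h\,\d t$. If the $\O(h^{k+1})$ bound on $f_h := \frac{\delta\cL_h}{\delta x}[x_h]$ failed, one picks along a subsequence points $t_n$ where $|f_{h_n}(t_n)|/h_n^{k+1} \to \infty$; admissibility of $f_h'$ yields a length scale $\delta_n \asymp |f_{h_n}(t_n)|$ on which $f_{h_n}$ keeps its sign, so a scaled bump $\delta x_{h_n}(t) = \operatorname{sign}(f_{h_n}(t_n))\,\phi((t-t_n)/\delta_n)$ makes $\delta S_{h_n}/\|\delta x_{h_n}\|_1 \asymp |f_{h_n}(t_n)|$, contradicting $k$-criticality. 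Part (c) runs the same argument in the discrete world: summation by parts gives $\delta S_\disc = \sum_j h R_j\,\delta x_j$ with $R_j := \D_2 L_\disc(x_{j-1},x_j,h) + \D_1 L_\disc(x_j,x_{j+1},h)$, and a single-index bump $\delta x_j = \operatorname{sign}(R_{j^*})$ at any index $j^*$ maximizing $|R_j|$ forces $|R_{j^*}| = \O(h^{k+1})$.

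Part (b) is the main obstacle. Applying \eqref{ibp} on each mesh interval and summing, continuity of $\delta x$ makes the $j=0$ boundary contributions telescope, while for $j\geq 1$ only the jumps of $\delta x^{(j)}$ at interior mesh points $t_k$ survive:
\[
\delta S_h = \int_a^b \frac{\delta\cL_h}{\delta x}\,\delta x\,\d t + \sum_{t_k}\sum_{j\geq 1} \frac{\delta\cL_h}{\delta x^{(j+1)}}(t_k)\,\bigl[\delta x^{(j)}(t_k^-) - \delta x^{(j)}(t_k^+)\bigr].
\]
The forward direction uses the hypothesis $\|\der{\cL_h}{x^{(\ell)}}\|_\infty = \O(h^{k+\ell+1})$, converted to the same bound on $\frac{\delta\cL_h}{\delta x^{(\ell)}}$ via Lemma \ref{lemma-admissible2}(a) and the identity $\frac{\delta\cL_h}{\delta x^{(\ell)}} = \der{\cL_h}{x^{(\ell)}} - \frac{\d}{\d t}\frac{\delta\cL_h}{\delta x^{(\ell+1)}}$, together with Lemma \ref{lemma-admissible2}(b) to control the jumps of $\delta x^{(j)}$ on shrinking mesh intervals. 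For the converse, smooth variations on all of $[a,b]$ recover the Euler-Lagrange defect bound exactly as in (a); to isolate the $\ell = j+1$ natural interior condition at a single interior mesh point $t_k$, I would test against the variation supported in $[t_k, t_k+h]$ given by $\delta x_h(t) = \frac{1}{j!}(t-t_k)^j\,\phi((t-t_k)/h)$, where $\phi$ is a plateau bump equal to $1$ near $0$ and vanishing to all orders at $1$. Such a variation produces exactly one nonzero jump, of size $1$ in the $j$-th derivative at $t_k$, no lower-order jumps, and satisfies $\|\delta x_h\|_1 = \O(h^{j+1})$, so the jump contribution to $\delta S_h$ equals $-\frac{\delta\cL_h}{\delta x^{(j+1)}}(t_k)$ up to an integral correction of size $\O(h^{k+1})\cdot\O(h^{j+1})$ controlled by the Euler-Lagrange bound already established; meshed $k$-criticality then forces $|\frac{\delta\cL_h}{\delta x^{(j+1)}}(t_k)| = \O(h^{k+j+2})$, which is the claimed order with $\ell = j+1$. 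The delicate ingredient is the interplay between the forced scaling $\delta = h$ (mesh width), the $h^{-j}$ blow-up of high-order derivatives of the test variation, and the need to keep every correction term strictly subdominant to the isolated jump being extracted.
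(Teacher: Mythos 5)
Parts $(a)$ and $(c)$ of your proposal are essentially the paper's argument and are fine. The gap is in the ``only if'' direction of part $(b)$. Your test variation $\delta x_h(t)=\frac{1}{j!}(t-t_k)^j\,\phi((t-t_k)/h)$ is not an admissible family: its derivatives of order $n>j$ scale like $h^{j-n}$ and blow up as $h\to 0$. Admissibility of the variations is not an optional convenience here --- it is exactly what makes your (and the paper's) sufficiency argument work, since Lemma \ref{lemma-admissible2}$(b)$ is applied to $\delta x_h$ on the shrinking mesh interval; if arbitrary piecewise smooth variations were allowed, the boundary terms involving high derivatives of $\delta x_h$ could not be controlled and the ``if'' direction would be false. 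So the class of variations you may test against in the converse direction is much smaller than you assume, and a one-shot extraction of the natural interior condition at full order $\O(h^{k+\ell+1})$ is not available.

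The paper circumvents this with three ingredients your proposal lacks: (i) the test variations are polynomials $p_{\ell,h}$ of degree exactly $\ell$ satisfying $p_{\ell,h}(ht)=h^\ell p_{\ell,1}(t)$, so that all their derivatives remain bounded as $h \to 0$; (ii) because such a polynomial cannot vanish to all orders at $t_0+h$, the surviving boundary term is the symmetric combination $\frac{h}{2}\big(\der{\cL_h}{x^{(\ell)}}\big|_{t_0}+\der{\cL_h}{x^{(\ell)}}\big|_{t_0+h}\big)$ rather than a single evaluation, and Lemma \ref{lemma-bootstrap}$(a)$ is needed to convert a bound on this sum into a sup-norm bound; (iii) the bound $\O(h^{k+\ell+1})$ is reached by an induction on the order $m$ of the estimate, gaining one power of $h$ per step, because at each stage the remaining boundary terms (those with $2\le i\le\ell$ and as yet unknown order) can only be controlled by the induction hypothesis. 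Your closing sentence correctly identifies the tension between the mesh scaling and the blow-up of derivatives of the test variation, but does not resolve it. A secondary, fixable point: your variation isolates $\frac{\delta\cL_h}{\delta x^{(j+1)}}$ rather than $\der{\cL_h}{x^{(j+1)}}$, and untangling the two requires the bounds for all higher $\ell$ simultaneously.
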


\begin{proof}
\begin{enumerate}[$(a)$]
\item Consider a family of Lagrangians $\cL_h$ and a smooth curve $x$. It is sufficient to consider variations that have a fixed 1-norm, say $\| \delta x_h \|_1 = 1$. For any such family of variations $(\delta x_h)_h$ we have
\[
\delta S_h = \int_a^b \frac{\delta \cL_h}{\delta x} \delta x_h \,\d t . \]
It follows that Equation \eqref{k-crit} holds if and only if $\delta S_h = \O(h^{k+1}) = \O\big( h^{k+1} \|\delta x_h\|_1 \big)$ for all families of variations with $\| \delta x_h \|_1 = 1$.

\item Any variation can be written as the sum of variations supported on single mesh intervals and a smooth variation. It is sufficient to look at these types of variations separately. Smooth variations are treated as in $(a)$. For a variation supported on a mesh interval $[t_0, t_0+h]$ we find
\[ \delta S_h = \int_{t_0}^{t_0 + h} \frac{\delta \cL_h}{\delta x} \delta x_h \,\d t + \sum_{i=2}^\infty \sum_{j=0}^{i-2} (-1)^j \frac{\d^j}{\d t^j} \der{\cL_h}{x^{(i)}} \delta x_h^{(i-j-1)} \bigg|_{t_0}^{t_0 + h}. \]
Note that we did not include $j=i-1$ in the summation range, because the variation $\delta x_h$ must vanish at the endpoints $t_0$ and $t_0+h$.
If $\| \delta x_h \|_\infty = \O(h^\ell)$ for some $\ell$ , then by Lemma \ref{lemma-admissible2}$(b)$ we have $\big\| \delta x_h^{(i-j-1)} \big\|_\infty = \O(h^{\ell-i+j+1})$. Then the conditions \eqref{k-crit-mesh} imply that $\delta S_h = \O(h^{k+\ell+2})$. Since $\| \delta x_h \|_1 = \O(h^{\ell+1})$, this is sufficient for meshed $k$-criticality.

By considering smooth variations as in part $(a)$, we can conclude that also in this case the Euler-Lagrange equations up to order $k$ are necessary conditions. More subtle to show is the necessity of the natural interior conditions. The difficulty is that the derivatives of variations supported on a mesh interval $[t,t+h]$ are usually unbounded as $h \rightarrow 0$, so the set of admissible families of such variations is rather small. 

We will use induction on $m$ to show that
\begin{align*}
\forall m \geq 0:\ \forall k \geq 0:\ \forall \ell \geq \max\{2,m-k-1\} &:\ \left\| \der{\cL_h}{x^{(\ell)}} \right\|_\infty = \O(h^{m}) \\ 
&\quad \text{ on $k$-critical families}.
\end{align*}
For $m=0$ this follows from the admissibility of the family of curves.

Now fix some $M$ and suppose the claim holds for $m < M$. Take any $k \geq 0$ and $\ell \geq \max\{2,M-k-1\}$. Note that this implies $k + \ell \geq M-1$. To construct admissible variations we consider the family of polynomials $p_{\ell,h}(t)$ of degree $\ell$ in $t$ that satisfies
\begin{align*}
& p_{\ell,h}(0) = p_{\ell,h}(h) = 0, \\
& p'_{\ell,h}(0) = p'_{\ell,h}(h), \quad p''_{\ell,h}(0) = p''_{\ell,h}(h), \quad \ldots \quad p_{\ell,h}^{(\ell-2)}(0) = p_{\ell,h}^{(\ell-2)}(h), \\
& p_{\ell,h}^{(\ell)} \equiv 1.
\end{align*}
For each $\ell,h$ these conditions uniquely define a polynomial, because they are equivalent to $\ell+1$ independent linear equations in the coefficients of $p_{\ell,h}$. Note that these polynomials satisfy the scaling relation $p_{\ell,h}(h t) = h^\ell p_{\ell,1}(t)$, from which it follows that $\max_{[0,h]} \big| p_{\ell,h}^{(j)} \big| = \O(h^{\ell-j})$. In particular, we have that $p_{\ell,h}^{(\ell-1)}(t) = t - \frac{h}{2}$. 

Pick any time $t_0$. Consider the family of variations 
\[ \delta x_h(t) = p_{\ell,h}(t -t_0) \, \mathds{1}_{[t_0 ,t_0 + h ]}(t) \, v, \]
where $\mathds{1}_A$ denotes the indicator function of $A$ and $v \in \R^N$ is a constant vector. Then $\|\delta x_h\|_1 = \O(h^{\ell+1})$, hence for meshed $k$-critical families of curves there holds that
\begin{align*}
\delta S &= \int_{t_0}^{t_0 + h} \sum_{i=1}^\ell (-1)^i \frac{\d^i}{\d t^i} \der{\cL_h}{x^{(i)}} \delta x_h \,\d t \\
&\qquad + \sum_{i=2}^\ell \sum_{j=0}^{i-2} (-1)^j \frac{\d^j}{\d t^j} \der{\cL_h}{x^{(i)}} \delta x_h^{(i-j-1)} \bigg|_{t_0}^{t_0 + h} 
= \O(h^{k+\ell+2}).
\end{align*}
In fact there even holds that,
\begin{equation}\label{meshed-var}
\begin{split}
\sup_{t_0} \Bigg| & \int_{t_0}^{t_0 + h} \sum_{i=1}^\ell (-1)^i \frac{\d^i}{\d t^i} \der{\cL_h}{x^{(i)}} \delta x_h \,\d t \\
&+ \sum_{i=2}^\ell \sum_{j=0}^{i-2} (-1)^j \frac{\d^j}{\d t^j} \der{\cL_h}{x^{(i)}} \delta x_h^{(i-j-1)} \bigg|_{t_0}^{t_0 + h} \Bigg|
 = \O(h^{k+\ell+2}),
\end{split}
\end{equation}
which can be proved using shifted variations for which $t_0$ depends on $h$. We have already established that $\left\| \frac{\delta \cL_h}{\delta x} \right\|_\infty = \O(h^{k+1})$ on meshed $k$-critical families and the induction hypothesis implies that $\left\| \der{\cL_h}{x^{(i)}} \right\|_\infty = \O(h^{M-1})$ for $i > \ell$ on meshed $k$-critical families. It follows that
\[ \sup_{t_0} \left| \int_{t_0}^{t_0 + h} \sum_{i=1}^\ell (-1)^i \frac{\d^i}{\d t^i} \der{\cL_h}{x^{(i)}} \delta x_h \,\d t \right| = \O(h^{M+1})\]
and hence (since $k+\ell+2 \geq M+1$)
\begin{equation}\label{bdy-terms}
\sup_{t_0} \left| \sum_{i=2}^\ell \sum_{j=0}^{i-2} (-1)^j \frac{\d^j}{\d t^j} \der{\cL_h}{x^{(i)}} \delta x_h^{(i-j-1)} \bigg|_{t_0}^{t_0 + h} \right| = \O(h^{M+1}) .
\end{equation}
Using the fact that $\big\| \delta x_h^{(i-j-1)} \big\|_\infty = \O(h^{\ell-i+j+1})$ and the induction hypothesis we find that for all $i$ and $j$ in the range of this sum, except $(i,j) = (\ell,0)$,
\[ \left\| \frac{\d^j}{\d t^j} \der{\cL_h}{x^{(i)}} \delta x_h^{(i-j-1)} \right\|_\infty = \O(h^{M+1}), \]
hence
\[ \sup_{t_0} \left| \frac{\d^j}{\d t^j} \der{\cL_h}{x^{(i)}} \delta x_h^{(i-j-1)}  \bigg|_{t_0}^{t_0 + h} \right| = \O(h^{M+1}). \]
Equation \eqref{bdy-terms} now implies that the term with $(i,j)=(\ell,0)$ satisfies the same order condition:
\[ \sup_{t_0} \left| \frac{h}{2}\left( \der{\cL_h}{x^{(\ell)}} \bigg|_{t_0} + \der{\cL_h}{x^{(\ell)}} \bigg|_{t_0 + h} \right) \right|
= \O(h^{M+1}) . \]
By Lemma \ref{lemma-bootstrap}$(a)$ it follows that
\[ \left\| \der{\cL_h}{x^{(\ell)}} \right\|_\infty = \O(h^{M}). \]

This concludes the induction step and thus the proof that the interior conditions, up to the appropriate order, are necessary for $k$-criticality. 

\item If the family of discrete curves $(x_j)_j$ is $k$-critical, then
\begin{align*}
\sum_j h \left( \D_2 L_\disc(x_{j-1},x_j) + \D_1 L_\disc(x_j,x_{j+1}) \right) \delta x_j &= \delta S_\disc \\
&= \O(h^{k+1} \|(\delta x_j)_j\|).
\end{align*}
For some index $\ell$, set $\delta x_\ell = \frac{1}{h}$ and $\delta x_j = 0$ for $j \neq \ell$, then $\| (\delta x_j)_j \| = 1$. It follows that 
\[ \D_2 L_\disc(x_{\ell-1},x_\ell) + \D_1 L_\disc(x_\ell,x_{\ell+1}) = \O(h^{k+1}). \]

On the other hand, for any family of variations $(\delta x_j)_j$ with $\| (\delta x_j)_j \| = 1$ we have
\begin{align*}
|\delta S_\disc| &\leq \sum_j h \left|\left( \D_2 L_\disc(x_{j-1},x_j) + \big. \D_1 L_\disc(x_j,x_{j+1}) \right) \delta x_j \right| \\
&\leq \bigg( \sum_j h |\delta x_j| \bigg) \max_j \left( \left| \D_2 L_\disc(x_{j-1},x_j) + \big. \D_1 L_\disc(x_j,x_{j+1}) \right| \right) \\
&= \max_j \left( \left| \D_2 L_\disc(x_{j-1},x_j) + \big. \D_1 L_\disc(x_j,x_{j+1}) \right| \right).
\end{align*}
Hence $(x_j)_j$ is $k$-critical if the discrete Euler-Lagrange equations are satisfied up to order $k$.
\qedhere
\end{enumerate}
\end{proof}

\subsection{Properties of the meshed modified Lagrangian}

Now that we have established the analytic framework, it is time to list some important properties of the meshed modified Lagrangian. 

\begin{lemma}
Let $L_\disc$ be a consistent discretization of a regular Lagrangian $\cL(x,\dot{x})$. Then the zeroth order term of the modified Lagrangian is the original continuous Lagrangian, i.e.\@ $\cL_\mesh([x],h) = \cL (x,\dot{x}) +\O(h)$. 
\end{lemma}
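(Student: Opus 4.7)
The plan is to peel off the two layers that separate $\cL_\mesh$ from $\cL$: first the Euler–Maclaurin-type correction series that converts $\cL_\disc$ into $\cL_\mesh$, and then the Taylor expansion that converts the original two-point function $L_\disc$ into the derivative-based form $\cL_\disc$.

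First I would read off the $i=0$ coefficient in the definition of $\cL_\mesh$. Since $B_0 = 1$ and $(2^{1-0}-1)\,B_0/0! = 1$, the $i=0$ term of the sum is exactly $\cL_\disc([x],h)$, while every subsequent term carries an explicit prefactor $h^{2i}$ with $i \geq 1$. Hence
\[ \cL_\mesh([x],h) = \cL_\disc([x],h) + \O(h^2), \]
and it suffices to show that $\cL_\disc([x],h) = \cL(x,\dot{x}) + \O(h)$.

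Next I would use that $L_\disc$ is a consistent discretization of $\cL$. Applying Definition \ref{def-consistency}$(a)$ with the argument shifted by $-h/2$ gives
\[ L_\disc\!\left(x\!\left(t-\tfrac{h}{2}\right),\, x\!\left(t+\tfrac{h}{2}\right),\, h\right) = \cL\!\left(x\!\left(t-\tfrac{h}{2}\right),\, \dot{x}\!\left(t-\tfrac{h}{2}\right)\right) + \O(h), \]
and the right-hand side equals $\cL(x(t),\dot{x}(t)) + \O(h)$ by Taylor expansion of $x$ and $\dot x$ around $t$. By Equation~\eqref{sca-disclag} the left-hand side is precisely $\cL_\disc([x(t)],h)$, so the claim follows. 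One could alternatively reach the same conclusion from the proof of Proposition~\ref{prop-Ldisc-orders}, noting that $y = x + \O(h^2)$ and $z = \dot{x} + \O(h^2)$, so $\cL(y,z) = \cL(x,\dot{x}) + \O(h^2)$ and the remaining $h g_1(y,z) + \ldots$ contributes only to positive orders in $h$.

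There is essentially no obstacle here: the entire content is that the leading Bernoulli-weighted correction vanishes (indeed is of order $h^2$) and that consistency delivers the leading term $\cL(x,\dot{x})$ by definition. The regularity hypothesis on $\cL$ plays no role in this particular statement; it will matter later when the construction of higher-order terms of $\cL_\mod$ requires inverting $\partial^2\cL/\partial \dot{x}^2$.
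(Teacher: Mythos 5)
Your proof is correct and follows essentially the same route as the paper's: the $i\geq 1$ terms of the Bernoulli series contribute only $\O(h^2)$, and consistency of $L_\disc$ (evaluated at the shifted argument $t-\tfrac{h}{2}$, then Taylor-expanded back to $t$) gives the leading term $\cL(x,\dot{x})$. The paper states this as a three-line chain of equalities; you have merely made the same steps explicit.
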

\begin{proof}
We have 
\begin{align*}
\cL_\mesh([x(t)],h)
&= \cL_\disc([x(t)],h) + \O(h^2) \\
&= L_\disc \left( x \left(t - \tfrac{h}{2} \right) , x \left(t + \tfrac{h}{2} \right) ,h \right) + \O(h^2) \\
&= \cL(x(t),\dot{x}(t)) + \O(h). \qedhere
\end{align*} 
\end{proof}

An essential property of the meshed modified Lagrangian is that any curve that solves the Euler-Lagrange equations automatically satisfies the natural interior conditions.

\begin{lemma}\label{lemma-partials}
If a family of curves $(x_h)_h$ satisfies the Euler-Lagrange equations of $\cL_\mesh$ up to order $k$ then it satisfies the natural interior conditions
\[ \left\| \der{\cL_\mesh}{x^{(\ell)}} \right\|_\infty = \O(h^{k+\ell+1}) \qquad \text{for all } \ell \geq 2. \]
In other words, every $k$-critical family of curves for $\cL_\mesh$ is also meshed $k$-critical, $\cC_k^\cM(\cL_\mesh) = \cC_k(\cL_\mesh)$. 
\end{lemma}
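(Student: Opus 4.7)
The claim can be rephrased as: if $(x_h)_h$ satisfies $\|\delta \cL_\mesh / \delta x\|_\infty = \O(h^{k+1})$, then also $\|\partial \cL_\mesh / \partial x^{(\ell)}\|_\infty = \O(h^{k+\ell+1})$ for all $\ell \geq 2$. By Lemma~\ref{lemma-k-crit} this is equivalent to the inclusion $\cC_k(\cL_\mesh) \subseteq \cC_k^\cM(\cL_\mesh)$, the reverse inclusion being immediate since smooth variations are meshed variations.

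The strategy hinges on a feature peculiar to $\cL_\mesh$. Lemma~\ref{lemma-EM} applied to $f(t) = \cL_\disc([x], h)$ gives, for any smooth $x$ and any mesh alignment starting at $t_0$, the asymptotic identity
\[
\sum_j h L_\disc\bigl(x(t_0+jh),\, x(t_0+(j+1)h),\, h\bigr) \simeq \int \cL_\mesh([x],h)\,\d t.
\]
The left-hand side depends on $x$ only through its mesh-point values. Hence if $\delta x_h$ is an admissible variation supported on $[t_0, t_0+h]$ that vanishes at the two endpoints, the left-hand side is unchanged exactly, so the variation of the right-hand side is smaller than any power of $h$ (after the appropriate scaling by $\|\delta x_h\|_1$). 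This makes the bump variations used in the proof of Lemma~\ref{lemma-k-crit}(b) effectively exact rather than merely $\O(h^{k+1})$. Concretely, I would reuse the polynomial test variations $\delta x_h(t) = p_{\ell, h}(t - t_0)\, \mathds{1}_{[t_0, t_0+h]}(t)\, v$ for constant $v$, any mesh point $t_0$, and $\ell \geq 2$. Integrating the variation of $\int \cL_\mesh\,\d t$ by parts produces the Euler-Lagrange integral plus a sum of boundary contributions. The Euler-Lagrange integral is $\O(h^{k+\ell+2})$ by hypothesis and the size $\|\delta x_h\|_1 = \O(h^{\ell+1})$, so the boundary sum is also $\O(h^{k+\ell+2})$. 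An induction on $\ell$, identical in structure to the one in the proof of Lemma~\ref{lemma-k-crit}(b), peels off one $\partial \cL_\mesh / \partial x^{(\ell)}$ at a time using the periodicity of derivatives of $p_{\ell, h}$, and Lemma~\ref{lemma-bootstrap}(a) converts each mesh-adjacent sum bound into a sup-norm bound. Varying the mesh origin $t_0$ then lifts the mesh-point bound to a bound on all of $[a,b]$.

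The main obstacle is justifying that the asymptotic identity of Lemma~\ref{lemma-EM} can be differentiated in the smooth curve, i.e.\@ that the Euler-Maclaurin remainder admits a variation of order $\O(h^N \|\delta x_h\|_1)$ for every $N$. This should follow from inspection of the expansion derived in Lemma~\ref{lemma-EM} together with the observation that $x \mapsto \cL_\disc([x], h)$ and all of its $t$-derivatives depend smoothly and boundedly on admissible $x_h$; but it is the one genuinely analytic step in the argument, and once it is in hand everything afterwards is a mechanical bootstrap mirroring the proof of Lemma~\ref{lemma-k-crit}(b).
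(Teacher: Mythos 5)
Your proposal follows the paper's proof essentially verbatim: the polynomial bump variations $p_{\ell,h}$ from the proof of Lemma \ref{lemma-k-crit}$(b)$ leave the discrete action unchanged, the formal identity $S_\disc \simeq \int \cL_\mesh \,\d t$ then forces the variation of the meshed action to vanish to all orders (giving Equation \eqref{meshed-var}), and the same bootstrap induction via Lemma \ref{lemma-bootstrap}$(a)$ yields the natural interior conditions. The analytic point you flag at the end --- that the Euler--Maclaurin asymptotic identity may be varied in the curve --- is precisely the step the paper passes over without comment, so raising it is a fair addition rather than a deviation.
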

\begin{proof}
Consider the same family of polynomials $p_{\ell,h}(t)$ as in the proof of Lemma \ref{lemma-k-crit}$(b)$ and the corresponding family of variations $\delta x_h(t) = p_{\ell,h}(t -t_0) \mathds{1}_{[t_0 ,t_0 + h ]} v$. Since these variations do not affect the discrete action 
\[ S_\disc = \sum_j L_\disc(x_h(t_0 + (j-1)h),x_h(t_0 + jh) , h), \]
there holds for every curve that
\[ \int_a^b \sum_{i=0}^\ell \der{\cL_\mesh}{x^{(i)}} \delta x_h^{(i)} \,\d t \simeq \delta S_\disc = 0. \]
In particular this implies Equation \eqref{meshed-var}. Since the Euler-Lagrange equations are satisfied up to order $k$, we can proceed exactly as in the proof of Lemma \ref{lemma-k-crit}$(b)$. 
\end{proof}

The modified Lagrangian depends on fewer derivatives of $x$ than $\cL_\disc$ (cf.\@ Proposition \ref{prop-Ldisc-orders}):

\begin{proposition}\label{prop-orders}
For $\ell \geq 1$ the $h^\ell$-term of $\cL_\mesh$ (as a power series in $h$) depends on $x, \dot{x}, \ldots, x^{(\ell)}$, but not on higher derivatives of $x$. 
\end{proposition}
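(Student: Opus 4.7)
The plan is to locate where the derivative $x^{(\ell+1)}$ could appear in the $h^\ell$-coefficient of $\cL_\mesh$ and show that the resulting coefficient vanishes. Writing $\phi_k$ for the $h^k$-coefficient of $\cL_\disc$ and setting $c_i := (2^{1-2i}-1)B_{2i}/(2i)!$, the definition of $\cL_\mesh$ gives
\[ \psi_\ell := \phi_\ell + \sum_{i=1}^{\lfloor \ell/2 \rfloor} c_i \frac{\d^{2i}}{\d t^{2i}} \phi_{\ell-2i} \]
for the $h^\ell$-coefficient of $\cL_\mesh$, and a priori each summand may depend on $x^{(\ell+1)}$.

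The first step is to sharpen the proof of Proposition \ref{prop-Ldisc-orders}: from the expansion $\cL_\disc([x],h) = \cL(y,z) + hg_1(y,z) + \ldots$, with $y = x + \tfrac{h^2}{8}\ddot{x} + \ldots$ and $z = \dot{x} + \tfrac{h^2}{24}x^{(3)} + \ldots$ being even power series in $h$, one reads off that $\phi_k$ for odd $k$ depends only on $x,\dot{x},\ldots,x^{(k)}$, whereas $\phi_k$ for even $k$ depends on $x^{(k+1)}$ only \emph{linearly}, with coefficient $\tfrac{1}{2^{k}(k+1)!}\der{\cL}{\dot{x}}(x,\dot{x})$. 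For odd $\ell$ the proposition then follows immediately: each $\phi_{\ell-2i}$ has odd subscript with highest derivative at most $x^{(\ell-2i)}$, so $\frac{\d^{2i}}{\d t^{2i}} \phi_{\ell-2i}$ contains at most $x^{(\ell)}$.

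For even $\ell$ I would split $\phi_{\ell-2i} = A_{\ell-2i} + B_{\ell-2i}\, x^{(\ell-2i+1)}$, where $A_{\ell-2i}$ depends only on $x,\dot{x},\ldots,x^{(\ell-2i)}$ and $B_{\ell-2i} = \tfrac{1}{2^{\ell-2i}(\ell-2i+1)!}\der{\cL}{\dot{x}}(x,\dot{x})$. The first summand produces no $x^{(\ell+1)}$ after $2i$ time-differentiations; for the second, the Leibniz rule shows that only the term in which no derivative falls on $B_{\ell-2i}$ contributes $x^{(\ell+1)}$, and its coefficient is just $B_{\ell-2i}$. Summing over $i$,
\[ \text{coefficient of } x^{(\ell+1)} \text{ in } \psi_\ell \;=\; \der{\cL}{\dot{x}}(x,\dot{x}) \cdot \sum_{i=0}^{\ell/2} \frac{c_i}{2^{\ell-2i}(\ell-2i+1)!}. \]

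The main obstacle is then the Bernoulli-number identity $\sum_{i=0}^{m} c_i\big/\big(2^{2m-2i}(2m-2i+1)!\big) = 0$ for $m \geq 1$ (where $\ell = 2m$). The plan is to obtain it from the generating-function identity
\[ \frac{u}{\sin u} = \sum_{i \geq 0} c_i (-4)^i u^{2i}, \]
which follows from Lemma \ref{lemma-EM} applied to the test function $f(t) = e^{i\omega t}$ with $u = \omega h/2$. Multiplying both sides by $\sin u = \sum_{j \geq 0} (-1)^j u^{2j+1}/(2j+1)!$ and reading off the coefficient of $u^{2m+1}$ --- which must vanish for $m \geq 1$ since the product equals $u$ --- yields precisely the required identity, after clearing the factor $(-1)^m 2^{2m}$.
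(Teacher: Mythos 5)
Your proof is correct, but it takes a genuinely different route from the paper's. The paper deduces the proposition from its analytic machinery: every admissible family of curves trivially satisfies the Euler--Lagrange equations of $\cL_\mesh$ ``up to order $-1$,'' so Lemma \ref{lemma-partials} gives $\left\| \der{\cL_\mesh}{x^{(\ell)}} \right\|_\infty = \O(h^\ell)$ on \emph{all} admissible families, which forces $x^{(\ell)}$ to occur only in terms of order at least $h^\ell$. That argument is short but leans on the entire $k$-criticality apparatus (the polynomial variations $p_{\ell,h}$ and the bootstrap lemmas). Your computation is elementary and self-contained: it exhibits the cancellation of the top derivative explicitly and identifies its mechanism as the power-series identity $\frac{u}{\sin u}\cdot \sin u = u$, equivalently $\sum_i c_i t^{2i} = \frac{t/2}{\sinh(t/2)}$, which can also be read off directly from $\sum_i \frac{B_{2i}}{(2i)!}t^{2i} = \frac{t}{2}\coth\frac{t}{2}$ rather than from Lemma \ref{lemma-EM} (whose $\simeq$ determines the coefficients only as a formal series, which is all you need). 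What you lose is the uniform analytic statement the paper reuses elsewhere (the size of $\der{\cL_\mesh}{x^{(\ell)}}$ along arbitrary admissible families); what you gain is an explicit formula for the borderline coefficient, a proof independent of the admissibility analysis, and a consistency check against the explicit computations in Section \ref{sec-ex} (e.g.\@ the coefficient $\frac{1}{24}\dot{x}$ of $x^{(3)}$ in the $h^2$-term of $\cL_\disc$ for the St\"ormer--Verlet example matches $\frac{1}{2^2 3!}\der{\cL}{\dot{x}}$, and indeed $x^{(3)}$ drops out of $\cL_\mesh$).

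One small patch is needed: the decomposition $\phi_{\ell-2i} = A_{\ell-2i} + B_{\ell-2i}\,x^{(\ell-2i+1)}$ fails for the boundary term $i = \ell/2$, since $\phi_0 = \cL(x,\dot{x})$ is in general not affine in $\dot{x}$. The conclusion nevertheless survives, because for any function $F$ of $x,\ldots,x^{(p)}$ the total derivative $\frac{\d^{r}}{\d t^{r}}F$ is affine in its top variable $x^{(p+r)}$ with coefficient $\der{F}{x^{(p)}}$; hence $\frac{\d^{\ell}}{\d t^{\ell}}\phi_0$ contributes exactly $\der{\cL}{\dot{x}}\,x^{(\ell+1)}$ plus terms in at most $x^{(\ell)}$, which is the value $B_0 = \der{\cL}{\dot{x}}$ your sum assigns to $i=\ell/2$. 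Either treat this edge case separately or phrase the whole extraction as $\der{}{x^{(\ell+1)}}\frac{\d^{2i}}{\d t^{2i}}\phi_{\ell-2i} = \der{\phi_{\ell-2i}}{x^{(\ell-2i+1)}}$, and the proof is complete.
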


\begin{proof}
Any admissible family of curves satisfies the Euler-Lagrange equations up to order $-1$:
\[ \left\| \frac{\delta \cL_\mesh}{\delta x} \right\|_\infty = \O(1). \]
Hence it follows from Lemma \ref{lemma-partials} that for all $\ell \geq 2$:
\[ \left\| \der{\cL_\mesh}{x^{(\ell)}} \right\|_\infty = \O(h^\ell), \]
which implies that $x^{(\ell)}$ can only occur in $\cL_\mesh$ in terms of order at least $h^\ell$. 
\end{proof}

\subsection{The modified equation}

From Lemma \ref{lemma-partials} it follows that $k$-critical families of curves for $\cL_\mesh$ satisfy the Euler-Lagrange equation
\[
\der{\cL_\mesh}{x} - \frac{\d}{\d t} \der{\cL_\mesh}{\dot{x}} = \O(h^{k+1})
\]
even though $\cL_\mesh$ depends on higher derivatives of $x$. By Proposition \ref{prop-orders}, this equation takes the form
\begin{equation}\label{high-order-EL}
\begin{split}
&\cE_0(x,\dot{x},\ddot{x}) + h \cE_1(x,\dot{x},\ddot{x}) \\
&\quad + h^2 \cE_2(x,\dot{x},\ddot{x},x^{(3)}) + \ldots + h^k \cE_k(x,\dot{x},\ldots,x^{(k+1)}) = \O(h^{k+1}).
\end{split}
\end{equation}
If we replace the error term by an exact zero, this is a singularly perturbed equation, whose solutions in general have increasingly steep boundary layers as $h \rightarrow 0$. However, the condition that $(x_h)_h$ is an admissible family of curves excludes this behavior and allows us to write Equation \eqref{high-order-EL} as a second order differential equation with an $\O(h^{k+1})$ defect. This is done by a simple recursion.

If $L_\disc$ is a consistent discretization of some regular continuous Lagrangian, then for sufficiently small $h$ we can solve $\cE_0(x,\dot{x},\ddot{x}) + h \cE_1(x,\dot{x},\ddot{x}) = \O(h^2)$ for $\ddot{x}$, say 
\[ \ddot{x} = F_1(x,\dot{x},h) + \O(h^2) . \]
Now suppose that Equation \eqref{high-order-EL} implies $\ddot{x} = F_k(x,\dot{x},h) + \O(h^{k+1})$. Then there exist functions $F_k^2,F_k^3,\ldots: T\R \times \R_{>0} \rightarrow \R$ such that
\begin{align*}
&\ddot{x} = F_k^2(x,\dot{x},h) + \O(h^{k+1}) = F_k(x,\dot{x},h) + \O(h^{k+1}), \\
&x^{(3)} = F_k^3(x,\dot{x},h) + \O(h^{k+1}), \\
&x^{(4)} = F_k^4(x,\dot{x},h) + \O(h^{k+1}), \ \ldots .
\end{align*}
Then Equation \eqref{high-order-EL} (with $k$ replaced by $k+2$) implies
\begin{align*}
&\cE_0(x,\dot{x},\ddot{x}) + h \cE_1(x,\dot{x},\ddot{x}) \\
&\quad + \left( h^2 \cE_2(x,\dot{x},\ddot{x},x^{(3)}) + \ldots + h^{k+2} \cE_k(x,\dot{x},\ldots,x^{(k+3)}) \right) 
\!\Big|_{x^{(j)} = F_k^j(x,\dot{x},h)} \\
&\hspace{9.5cm} = \O(h^{k+3}).
\end{align*}
After making the replacements, the terms between the parentheses only depend on $x$ and its first derivative. Hence we can solve this equation for $\ddot{x}$ to find an expression of the form $\ddot{x} = F_{k+2}(x,\dot{x},h) + \O(h^{k+3})$.

Note that each step of this recursion increases the order of accuracy by two. This is the case because we only replace derivatives in terms of second and higher order.

\subsection{A classical modified Lagrangian}

\begin{definition}
The \emph{modified Lagrangian} is the formal power series
\[ \cL_\mod(x,\dot{x},h) = \cL_\mesh([x],h)
\Big|_{\ddot{x} = f(x,\dot{x},h), \ x^{(3)} = \frac{\d}{\d t} f(x,\dot{x},h), \ \ldots} , \]
where $\ddot{x} = f(x,\dot{x},h)$ is the modified equation. The $k$-th truncation of the modified Lagrangian is denoted by $\cL_{\mod,k}$,
\[ \cL_{\mod,k}(x,\dot{x},h) = \cT_k\!\left( \cL_\mod(x,\dot{x},h) \right) = \cT_k\left( \cL_\mesh([x],h)\Big|_{x^{(j)} = F_{k-2}^j(x,\dot{x},h)} \right) , \]
where $\cT_k $ denotes truncation after the $h^k$-term. 
\end{definition}

From the definition it follows that $\cL_{\mod,k} ( x_h,\dot{x}_h,h ) = \cL_\mesh([x_h],h) + \O(h^{k+1})$ for families of curves $(x_h)_h$ that are $k$-critical for $\cL_\mesh$. Since this does not hold for general curves, it does not immediately imply that the Euler-Lagrange equations of both Lagrangians agree up to order $k$. Nevertheless, this property holds true.

\begin{lemma}\label{lemma-1stord}
The meshed modified Lagrangian $\cL_\mesh([x],h)$ and the modified Lagrangian $\cL_{\mod,k}(x,\dot{x},h)$ have the same $k$-critical families of curves.
\end{lemma}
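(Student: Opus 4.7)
\medskip\noindent\textbf{Proof proposal.}
The plan is to pass through the Euler--Lagrange equation. By Lemma \ref{lemma-k-crit}(a), an admissible family $(x_h)_h$ lies in $\cC_k(\cL_\mesh)$ iff $\big\|\frac{\delta \cL_\mesh}{\delta x}[x_h]\big\|_\infty = \O(h^{k+1})$, and in $\cC_k(\cL_{\mod,k})$ iff $\big\|\frac{\delta \cL_{\mod,k}}{\delta x}(x_h,\dot x_h,h)\big\|_\infty = \O(h^{k+1})$. I will prove that both of these conditions are equivalent to satisfying the truncated modified equation $\ddot x_h = \cT_k(f(x_h,\dot x_h,h)) + \O(h^{k+1})$.

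For the $\cL_\mesh$ side, the forward direction is already present in the preceding material: Lemma \ref{lemma-partials} promotes $k$-criticality to meshed $k$-criticality, and the recursion of the previous subsection then rewrites the high-order Euler--Lagrange condition in the reduced second-order form $\ddot x_h = F_k(x_h,\dot x_h,h) + \O(h^{k+1})$. Conversely, the modified equation is defined precisely so that its formal solution annihilates $\frac{\delta \cL_\mesh}{\delta x}$ order by order. Since any admissible family satisfying $\ddot x_h = F_k + \O(h^{k+1})$ also satisfies $x_h^{(j)} = F_k^j + \O(h^{k+1})$ for every $j \ge 2$ (differentiating an $\O(h^{k+1})$ defect preserves its order on admissible families by Lemma \ref{lemma-admissible2}(a)), the higher-order Euler--Lagrange operator of $\cL_\mesh$ evaluated on $x_h$ matches the vanishing formal expansion to the required order.

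For the $\cL_{\mod,k}$ side, I compute the Euler--Lagrange operator directly via the chain rule applied to the substitution $x^{(j)} = F_{k-2}^j(x,\dot x,h)$. The variation of $\cL_{\mod,k}$ decomposes into a direct contribution, coming from $\frac{\partial \cL_\mesh}{\partial x}$ and $\frac{\partial \cL_\mesh}{\partial \dot x}$ after substitution, and a substitution contribution of the form $\sum_{j \ge 2} \frac{\partial \cL_\mesh}{\partial x^{(j)}}\,\delta F_{k-2}^j$. Proposition \ref{prop-orders} ensures that $\frac{\partial \cL_\mesh}{\partial x^{(j)}}$ starts at order $h^j$, while by construction the $F_{k-2}^j$ satisfy $\frac{\d}{\d t}F_{k-2}^j\big|_{\ddot x = F_{k-2}^2} = F_{k-2}^{j+1}$. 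Integrating by parts in the action and repeatedly applying these two identities, the substitution contribution should reassemble into exactly the $(-1)^j \frac{\d^j}{\d t^j}\frac{\partial \cL_\mesh}{\partial x^{(j)}}$ terms appearing in the full higher-order Euler--Lagrange operator of $\cL_\mesh$, modulo terms killed by the truncation $\cT_k$.

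The hard part is this chain-rule bookkeeping: I must control how the infinite tower of substitutions $x^{(j)} \to F_{k-2}^j$ interacts with repeated time differentiation and the truncation $\cT_k$, and verify that every correction term generated by the chain rule either collapses into the reduced Euler--Lagrange operator of $\cL_\mesh$ or is absorbed into the $\O(h^{k+1})$ defect. Once this identification of Euler--Lagrange operators is in place, combining it with the characterization of $\cC_k(\cL_\mesh)$ via the truncated modified equation yields $\cC_k(\cL_\mesh) = \cC_k(\cL_{\mod,k})$.
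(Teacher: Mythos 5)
Your overall framing (show that $k$-criticality for each Lagrangian is equivalent to satisfying the truncated modified equation) is a legitimate alternative to the paper's route, and the $\cL_\mesh$ half of it is indeed essentially covered by Lemma \ref{lemma-partials} together with the recursion of the preceding subsection. The gap is in the $\cL_{\mod,k}$ half, and it is not just deferred bookkeeping: the identity you are hoping to establish there is false on general admissible families. The substitution $x^{(j)} \mapsto F_{k-2}^j(x,\dot x,h)$ is not the identity on a curve that does not (approximately) solve the modified equation, so $\delta F_{k-2}^j \neq \delta x^{(j)}$ and the chain-rule terms $\der{\cL_\mesh}{x^{(j)}}\,\delta F_{k-2}^j$ do not reassemble into $(-1)^j \tfrac{\d^j}{\d t^j}\der{\cL_\mesh}{x^{(j)}}\,\delta x$. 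Concretely, already at order $h^2$ the difference between $\cL_\mesh$ and $\cL_{\mod,k}$ contains a term like $\der{\cL_2}{\ddot x}\,(\ddot x - F_0^2(x,\dot x))$, which contributes at order $h^2$ to the Euler--Lagrange operator on any admissible family with $\ddot x - F_0^2 = \O(1)$; so the two Euler--Lagrange operators genuinely disagree at order $h^2$ away from the modified equation, and no amount of integration by parts will make the discrepancy $\O(h^{k+1})$ globally. (The paper itself points this out in Section 5.1: the Euler--Lagrange equation of $\cL_{\mod,3}$ is not the truncated modified equation, only $\O(h^4)$-close to it after solving for $\ddot x$.)

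The missing idea is to restrict the comparison to curves that are already approximately critical, and the clean way to do this is the paper's induction on $k$: assume $\cC_{k-1}(\cL_\mesh)=\cC_{k-1}(\cL_{\mod,k-1})$, note that this set contains every candidate $k$-critical family of either Lagrangian, and observe that on such families Lemma \ref{lemma-partials} gives $\big\|\der{\cL_\mesh}{x^{(\ell)}}\big\|_\infty = \O(h^{k+\ell})$ for $\ell\ge 2$. This kills your entire ``substitution contribution'' outright --- no reassembly is needed --- and yields $\der{\cL_{\mod,k}}{x} - \tfrac{\d}{\d t}\der{\cL_{\mod,k}}{\dot x} = \sum_{\ell}(-1)^\ell\tfrac{\d^\ell}{\d t^\ell}\der{\cL_\mesh}{x^{(\ell)}} + \O(h^{k+1})$ \emph{on that restricted set}, which is exactly what is needed to conclude $\cC_k(\cL_\mesh)=\cC_k(\cL_{\mod,k})$. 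Without this restriction (or an equivalent bootstrap from the leading-order Euler--Lagrange equation), your identification of the two Euler--Lagrange operators cannot be completed.
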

\begin{proof}
We proceed by induction. Since $\cL_\mesh([x],h) = \cL_{\mod,0}(x,\dot{x},h) + \O(h)$ they have the same $0$-critical families of curves, $C_0(\cL_\mesh) = C_0(\cL_{\mod,0})$. Now suppose that $C_{k-1}(\cL_\mesh) = C_{k-1}(\cL_{\mod,k-1})$. Since $k$-critical families of curves are also $(k-1)$-critical, this set contains all $k$-critical families of curves of both $\cL_\mesh$ and $\cL_{\mod,k}$.

For every family of curves in $C_{k-1}(\cL_\mesh)$ we have by Lemma \ref{lemma-partials}
\begin{align*}
\der{\cL_{\mod,k}}{x} &= \left.\left( \der{\cL_\mesh}{x} + \sum_{\ell=2}^\infty \der{\cL_\mesh}{x^{(\ell)}} \der{F_k^\ell(x,\dot{x},h)}{x} + \O(h^{k + 1}) \right)\right|_{x^{(j)} = F_{k-1}^j(x,\dot{x},h)} \\
&= \der{\cL_\mesh}{x} + \O(h^{k + 1})
\end{align*}
and
\begin{align*}
\der{\cL_{\mod,k}}{\dot x} &= \left.\left( \der{\cL_\mesh}{\dot x} + \sum_{\ell=2}^\infty \der{\cL_\mesh}{x^{(\ell)}}\der{F_k^\ell(x,\dot{x},h)}{\dot x} + \O(h^{k + 1}) \right)\right|_{x^{(j)} = F_{k-1}^j(x,\dot{x},h)}\\
&= \der{\cL_\mesh}{\dot x} + \O(h^{k + 1}),
\end{align*}
hence
\[ \der{\cL_{\mod,k}}{x} - \frac{\d}{\d t} \der{\cL_{\mod,k}}{\dot x} =  \sum_{\ell=0}^\infty (-1)^\ell \frac{\d^\ell}{\d t^\ell} \der{\cL_\mesh}{x^{(\ell)}} + \O(h^{k+1}), \]
which shows that $C_k(\cL_\mesh) = C_k(\cL_{\mod,k})$. 
\end{proof}

We now arrive at our main result: up to truncations, the modified equation is Lagrangian in the classical sense.

\begin{theorem}\label{thm-1stord}
For a discrete Lagrangian $L_\disc$ that is a consistent discretization of a regular Lagrangian $\cL$, the $k$-th truncation of the Euler-Lagrange equation of $\cL_{\mod,k}(x,\dot{x},h)$, solved for $\ddot{x}$, is the $k$-th truncation of the modified equation.
\end{theorem}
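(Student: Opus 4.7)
My plan is to apply the uniqueness part of the proposition after Definition \ref{defi-secondorder}: it suffices to show that any admissible family of solutions of the candidate equation $\ddot x_h = \cT_k(g(x_h,\dot x_h,h))$, where $g$ denotes the power series obtained by solving the Euler--Lagrange equation of $\cL_{\mod,k}$ for $\ddot x$, satisfies the defining property $\Psi(x_h(t-h),x_h(t),x_h(t+h),h) = \O(h^{k+1})$, where $\Psi$ is the left-hand side of the discrete Euler--Lagrange equation \eqref{disc-EL}. The hypothesis that $\cL$ is regular and that $L_\disc$ is a consistent discretization of $\cL$ guarantees (via Proposition \ref{prop-consistency}) that $\Psi$ falls under Definition \ref{defi-secondorder}, so the uniqueness of its modified equation forces $\cT_k(g) = \cT_k(f)$.

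The heart of the argument is then a short chain of implications. Let $(x_h)_h$ be admissible with $\ddot x_h = \cT_k(g(x_h,\dot x_h,h))$. By Lemma \ref{lemma-k-crit}$(a)$ this family is $k$-critical for $\cL_{\mod,k}$. Lemma \ref{lemma-1stord} then promotes it to $k$-critical for the meshed modified Lagrangian $\cL_\mesh$ (now in the higher-derivative sense), and Lemma \ref{lemma-partials} upgrades this further to meshed $k$-criticality, so that the natural interior conditions $\|\partial\cL_\mesh/\partial x^{(\ell)}\|_\infty = \O(h^{k+\ell+1})$ for $\ell\geq 2$ all hold automatically.

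It remains to transfer meshed $k$-criticality of $(x_h)_h$ for $\cL_\mesh$ into $k$-criticality of the sampled sequence $(x_h(t_0+jh))_j$ for $S_\disc$. For this I use the Euler--Maclaurin identity of Lemma \ref{lemma-EM}, which underlies the construction of $\cL_\mesh$ and yields the formal equality $S_\disc((x(jh))_j,h) \simeq \int \cL_\mesh([x(t)],h)\,\d t$ for every smooth curve. Applied to a variation $\delta x_h$, the same identity gives an analogous equality for first variations; the natural interior conditions kill the boundary terms that arise from integration by parts on each mesh interval, so $\delta S_\disc = \delta\!\int \cL_\mesh\,\d t + \O(h^\infty) = \O(h^{k+1}\,\|(\delta x_j)_j\|)$, using the norm comparison noted in the remark after the definition of $k$-criticality. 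Lemma \ref{lemma-k-crit}$(c)$ now gives that $\Psi$ vanishes to order $h^{k+1}$ on the sample points, and since the starting offset $t_0$ is arbitrary and the constants are uniform, this is exactly $\Psi(x_h(t-h),x_h(t),x_h(t+h),h) = \O(h^{k+1})$ for all $t$, as required.

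The main obstacle will be the careful bookkeeping in the final step, specifically verifying that the formal asymptotic $\simeq$ between $S_\disc$ and $\int\cL_\mesh\,\d t$ gives genuine $\O(h^{k+1})$ bounds on first variations rather than only a formal power-series identity. To do this one needs to realize arbitrary discrete variations as admissible meshed continuous variations (likely by using the polynomial bump family $p_{\ell,h}$ constructed in the proof of Lemma \ref{lemma-k-crit}$(b)$), and then account term by term for how the natural interior conditions neutralize the boundary contributions on each mesh interval. Once that bridge is in place, the remainder of the argument is essentially a repackaging of results already established.
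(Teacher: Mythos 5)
Your proposal is correct and follows essentially the same route as the paper: solutions of the truncated Euler--Lagrange equation of $\cL_{\mod,k}$ are $k$-critical for $\cL_{\mod,k}$, hence (by Lemma \ref{lemma-1stord}) for $\cL_\mesh$, and the formal equality $S_\disc \simeq \int \cL_\mesh \,\d t$ transfers this to $k$-criticality of the sampled discrete curve, so that Lemma \ref{lemma-k-crit}$(c)$ and Proposition \ref{prop-consistency} yield the defining property of Definition \ref{defi-secondorder}. The only cosmetic differences are that you phrase the conclusion via the uniqueness proposition and make the role of Lemma \ref{lemma-partials} and the interior conditions explicit, which the paper leaves implicit in the formal equality of the varied actions.
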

\begin{proof}
Let $(x_h)_h$ be an admissible family of solutions of the $k$-th truncation of the Euler-Lagrange equation for $\cL_{\mod,k}$. Then $(x_h)_h$ is $k$-critical for the family of actions $\int_a^b \cL_{\mod,k}(x,\dot{x},h) \,\d t$. Consider the discrete curve $(x_j)_j := (x(jh))_j$, an admissible family of variations $\delta x_h$ of $x_h$ and the corresponding family of variations $(\delta x_j)_j$ of $(x_j)_j$ with $\delta x_j = \delta x(jh)$. Then $\|\delta x\|_1 = \big(1 + \O(h)\big) \|(\delta x(jh))_j\|$.

By Lemma \ref{lemma-1stord}, the family $(x_h)_h$ is $k$-critical for $\cL_\mesh([x(t)],h)$. By construction, the actions $\sum_j h L_\disc(y(jh),y((j+1)h),h)$ and $\int_a^b \cL_\mesh([y(t)],h) \,\d t$ are (formally) equal for any smooth curve $y$. Therefore
\begin{align*} 
\delta S_\disc &= \delta \sum_j h L_\disc(x(jh),x((j+1)h),h) \\
&\simeq \delta \int_a^b \cL_\mesh([x(t)],h) \d t
= \O\big( h^{k+1} \|\delta x\|_1 \big) = \O\big( h^{k+1} \|(\delta x_j)_j\| \big),
\end{align*}
so the family of discrete curves $(x(jh))_j$ is $k$-critical for the family of discrete actions $S_\disc(\cdot,h)$. Hence $(x(jh))_j$ satisfies the discrete Euler-Lagrange equation up to order $h^k$, i.e.
\[ \D_2 L_\disc(x(t-h),x(t),h) + \D_1 L_\disc(x(t),x(t+h),h) = \O(h^{k+1}) . \]
By Proposition \ref{prop-consistency} the left hand side of this expression is a consistent discretization of the continuous Euler-Lagrange equation, so this order condition is the one defining a modified equation as in Definition \ref{defi-secondorder}. 
\end{proof}

Theorem \ref{thm-1stord} provides an alternative proof of the following well-known result \cite[Chapter IX.3]{hairer2006geometric}.

\begin{corollary}
If a symplectic method is applied to a Hamiltonian system with a regular Hamiltonian, then any truncation of the resulting modified equation is Hamiltonian.
\end{corollary}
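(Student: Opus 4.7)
The plan is to reduce the claim to Theorem~\ref{thm-1stord} through the standard correspondence between symplectic maps and discrete Lagrangians. First, any symplectic integrator close enough to the identity admits a Type-I generating function and can therefore be realized as a variational integrator: the generating function is, up to the sign conventions in \eqref{disc-sympl}, a discrete Lagrangian $L_\disc$, and one checks that $L_\disc$ is a consistent discretization of the continuous Lagrangian $\cL(x,\dot{x}) = \inner{p}{\dot{x}} - \cH$ obtained from $\cH$ by inverse Legendre transform (this inversion being possible because $\cH$ is regular).

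With a discrete Lagrangian in hand, Theorem~\ref{thm-1stord} supplies a classical modified Lagrangian $\cL_{\mod,k}(x,\dot{x},h)$ whose $k$-th truncation, after solving for $\ddot{x}$, reproduces the $k$-th truncation of the second-order modified equation on configuration space. Since $\cL_{\mod,k} = \cL + \O(h)$ and $\cL$ is regular, $\cL_{\mod,k}$ is regular as a formal power series for sufficiently small $h$, so its Legendre transform defines a modified Hamiltonian $\cH_{\mod,k}(x,p,h) = \cH(x,p) + \O(h)$. Hamilton's equations for $\cH_{\mod,k}$ are equivalent, as a first-order system, to the Euler-Lagrange equation of $\cL_{\mod,k}$, so truncating them at order $h^k$ yields a Hamiltonian system that agrees on configuration space with the truncated modified equation.

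The step I expect to require the most care is verifying that this Hamiltonian flow really interpolates the discrete symplectic map on \emph{phase space} at the claimed order, i.e.\ that the continuous momentum $\der{\cL_{\mod,k}}{\dot{x}}$, evaluated along a solution of the truncated modified equation, matches the discrete momentum $p_j$ prescribed by \eqref{disc-sympl}. The idea is that both quantities arise from the same underlying $L_\disc$: the discrete momentum is $-\D_1 L_\disc(x_j,x_{j+1},h)$, while $\der{\cL_{\mod,k}}{\dot{x}}$ is built from the meshed modified Lagrangian $\cL_\mesh$, which is in turn a Taylor expansion of $L_\disc$ around $t - \tfrac{h}{2}$, $t + \tfrac{h}{2}$. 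Combining the expansion \eqref{sca-disclag} with the reduction used in the proof of Lemma~\ref{lemma-1stord}, one should obtain that the two momenta coincide up to $\O(h^{k+1})$ on $k$-critical families. Once this momentum consistency is established, the truncated Hamiltonian flow of $\cH_{\mod,k}$ coincides with the full phase-space modified equation up to order $k$, which proves the corollary.
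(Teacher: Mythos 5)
Your proposal follows essentially the same route as the paper: pass to the Lagrangian side by inverse Legendre transformation, invoke Theorem~\ref{thm-1stord} to obtain the modified Lagrangian $\cL_{\mod,k}$, observe that it is regular for small $h$, and Legendre-transform back to a modified Hamiltonian. The only difference is that you explicitly flag the realization of the symplectic map as a variational integrator via its generating function and the phase-space momentum-matching question, both of which the paper's short proof leaves implicit; this is a legitimate refinement of detail rather than a different argument.
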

\begin{proof}
By applying the (inverse) Legendre transformation we obtain a Lagrangian system with regular Lagrangian. We can apply Theorem \ref{thm-1stord} to this Lagrangian to find a modified Lagrangian $\cL_{\mod,k} (x,\dot{x},h)$. For sufficiently small $h$, the modified Lagrangian is regular as well. Therefore we can take the Legendre transformation again and obtain a modified Hamiltonian. 
\end{proof}

\section{Examples}\label{sec-ex}

\subsection{St\"ormer-Verlet discretization of mechanical Lagrangians}

A Lagrangian $\cL:T\R^N \rightarrow \R$ is called \emph{separable} if there exists functions $K$ and $U$ such that $\cL(x,\dot{x}) = K(\dot{x}) - U(x)$. The Euler-Lagrange equation of such a Lagrangian is
\[ \der{^2 K(\dot{x})}{\dot{x}^2} \; \ddot{x} = - \der{U(x)}{x}. \]
If $\cL$ is separable, then the discrete Lagrangians $(\ref{Lh-SV})$, $(\ref{Lh-SE1})$, and $(\ref{Lh-SE2})$ from Example \ref{ex-methods} are equivalent (but their discrete Legendre transforms are different). A separable Lagrangian with $K(\dot{x}) = \frac{1}{2} |\dot{x}|^2$ is called a \emph{mechanical Lagrangian}.

\subsubsection{Second order}\label{sssect-sv}

We consider some mechanical Lagrangian $\cL(x,\dot{x}) = \frac{1}{2} |\dot{x}|^2 - U(x)$ and use the St\"ormer-Verlet discretization, whose discrete Lagrangian is given by Example \ref{ex-methods}$(\ref{Lh-SV})$,
\[ L_\disc(x_j,x_{j+1},h) = \frac{1}{2} \left| \frac{x_{j+1}-x_j}{h} \right|^2 - \frac{1}{2} U\left( x_{j} \right) - \frac{1}{2} U\left( x_{j+1} \right).\]
Its Euler-Lagrange equation is
\[ \frac{x_{j+1} - 2 x_j + x_{j-1}}{h^2} = - U'(x_j). \]
We have
\begin{align*}
\cL_\disc([x],h)
&= \left| \dot{x} + \frac{h^2}{24} x^{(3)} + \ldots \right|^2
- \frac{1}{2} U \bigg(x - \frac{h}{2}\dot{x} + \frac{1}{2} \left(\frac{h}{2}\right)^2 \ddot{x} - \ldots \bigg) \\
&\hspace{3.6cm}- \frac{1}{2} U \bigg(x + \frac{h}{2}\dot{x} + \frac{1}{2} \left(\frac{h}{2}\right)^2 \ddot{x} + \ldots \bigg) \\
&= \frac{1}{2} |\dot{x}|^2 - U + \frac{h^2}{24} \left( \binner{\dot{x}}{x^{(3)}} - 3 U'\ddot{x} -3 U''(\dot{x},\dot{x}) \right) + \O(h^4), 
\end{align*}
where the argument $x$ of $U$ has been omitted.

From $\cL_\disc([x],h)$ we calculate the meshed modified Lagrangian as follows:
\begin{align}
\cL_\mesh([x],h) 
&= \cL_\disc([x],h) - \frac{h^2}{24} \frac{\d^2}{\d t^2} \cL_\disc([x],h) + \O(h^4) \notag \\
&= \frac{1}{2} |\dot{x}|^2 - U + \frac{h^2}{24} \left( \binner{\dot{x}}{x^{(3)}} - 3 U' \ddot{x} - 3 U''(\dot{x},\dot{x}) \right)  \notag\\
&\hspace{2.2cm} - \frac{h^2}{24} \left( |\ddot{x}|^2 + \binner{\dot{x}}{x^{(3)}} - U' \ddot{x} - U''(\dot{x},\dot{x}) \right) + \O(h^4) \notag\\
&= \frac{1}{2} |\dot{x}|^2 - U + \frac{h^2}{24} \left( -|\ddot{x}|^2 - 2 U' \ddot{x} - 2 U''(\dot{x},\dot{x}) \right) + \O(h^4). \label{SV-meshlag}
\end{align}

The modified equation up to second order is then obtained from
\[
\O(h^{4}) = \der{\cL_\mesh}{x} - \frac{\d}{\d t} \der{\cL_\mesh}{\dot{x}} = - \ddot{x} - U' + \frac{h^2}{12} \left(U''\ddot{x} + U^{(3)}(\dot{x},\dot{x}) \right).
\]
We solve this recursively for $\ddot{x}$. In the leading order we have $\ddot{x} = -U'$, the original equation, so in the second order term we can substitute $\ddot{x} = -U'$. Hence the modified equation is 
\begin{equation}\label{mech-modeqn}
\ddot{x} = - U' + \frac{h^2}{12} \left(U^{(3)}(\dot{x},\dot{x}) - U''U' \right) + \O(h^4),
\end{equation}
as we already found in Example \ref{ex-modeqn}.

To obtain the classical modified Lagrangian we need to replace higher derivatives in the meshed modified Lagrangian \eqref{SV-meshlag} using the modified Equation \eqref{mech-modeqn}. In fact, to get the modified Lagrangian up two order two, we only need the leading order term $\ddot{x} = - U'$ of the modified equation. We find
\begin{equation} \label{SV-modlag}
\cL_{\mod,3}(x,\dot{x},h) = \frac{1}{2} |\dot{x}|^2 - U + \frac{h^2}{24} \left( \left|U'\right|^2 - 2 U''(\dot{x},\dot{x}) \right).
\end{equation}

Observe that the first order modified Lagrangian $\cL_{\mod,3}(x,\dot{x},h)$ is not separable for general $U$ because the term $U''(\dot{x},\dot{x})$ depends on both $x$ and $\dot{x}$. The Euler-Lagrange equation of $\cL_{\mod,3}$ is
\[
 -\ddot{x} -U' +\frac{h^2}{12}\left( U''U' + U^{(3)}(\dot{x},\dot{x}) + 2  U''\ddot{x} \right) = 0 .
\]
Note that this equation does not contain an error term. However when we solve it for $\ddot{x}$ we again get \eqref{mech-modeqn}, including the $\O(h^4)$ error term. In other words, $\ddot{x} = -U' +\frac{h^2}{12}\left( U^{(3)}(\dot{x},\dot{x}) - U''U' \right)$ is \emph{not} the Euler-Lagrange equation for $\cL_{\mod,3}$, but it is $\O(h^4)$-close to it.

\begin{example}[Kepler Problem]
The Lagrangian of a point-mass in a (classical) gravitational potential is
\[ \cL(x,\dot{x}) = \frac{1}{2} |\dot{x}|^2 + \frac{1}{|x|}. \]
Its Euler-Lagrange equation is $\ddot{x} = - \frac{x}{|x|^3}$. The St\"ormer-Verlet discretization of this system is
\[ x_{j+1} = 2 x_{j} - x_{j-1} - h^2 \frac{x_j}{|x_j|^3}. \]
Plugging the potential $U(x) = - \frac{1}{|x|}$ into Equation \eqref{SV-modlag}, we find that the third truncation of the modified Lagrangian is
\[ \cL_{\mod,3}(x,\dot{x},h) = \frac{1}{2} |\dot{x}|^2 + \frac{1}{|x|} + \frac{h^2}{24}\left( \frac{1}{|x|^4} - 2 \frac{|\dot{x}|^2}{|x|^3} + 6 \frac{\inner{x}{\dot{x}}^2}{|x|^5}\right). \]
The modified equation reads
\[ \ddot{x} = -\frac{x}{|x|^3} + \frac{h^2}{6}\frac{x}{|x|^6} - \frac{h^2}{2} \frac{\inner{x}{\dot{x}} \dot{x}}{|x|^5} - \frac{h^2}{4} \frac{|\dot{x}|^2 x}{|x|^5} + \frac{5h^2}{4} \frac{\inner{x}{\dot{x}}^2 x}{|x|^7} + \O(h^4). \]
Figure \ref{fig-kepler} shows that this modified equation exhibits the correct qualitative long-time behavior: the orbit precesses counterclockwise (whereas the mass orbits clockwise). The precession is marginally slower for the modified equation. In \cite{vermeeren2016numerical} modified Lagrangians are used to estimate the numerical precession rates of different variational integrators.

\begin{figure}[t]
\begin{minipage}{.49\linewidth}
\includegraphics[width=\linewidth]{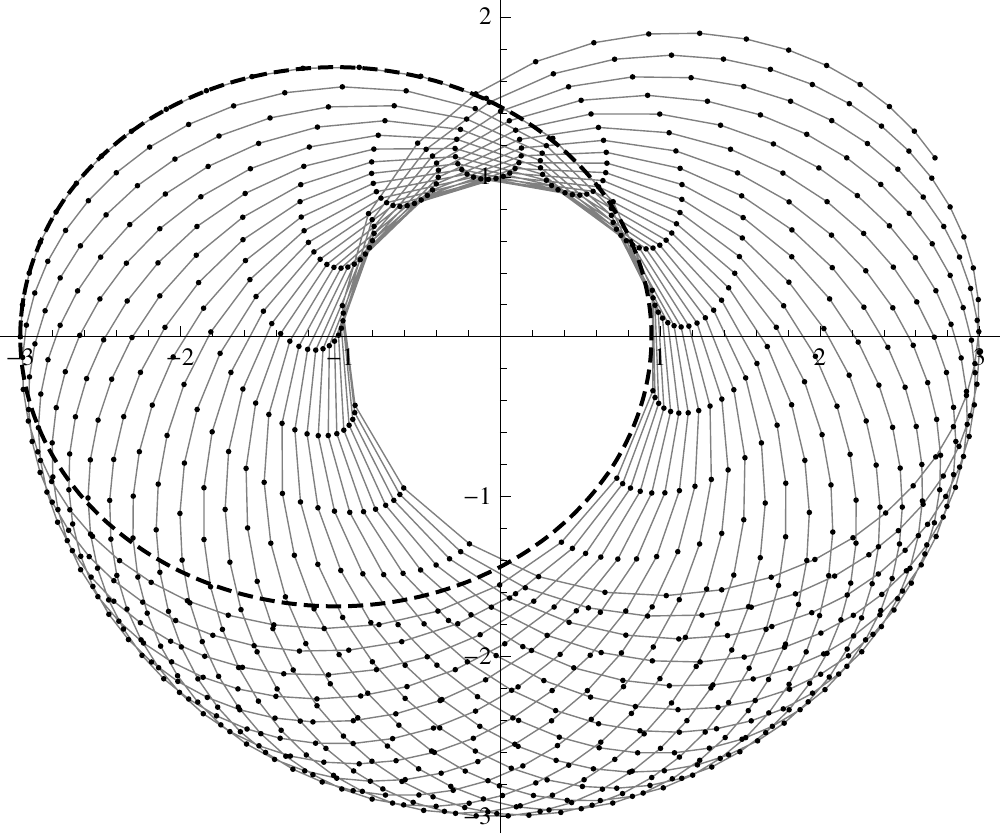}
\end{minipage}\hfill%
\begin{minipage}{.49\linewidth}
\includegraphics[width=\linewidth]{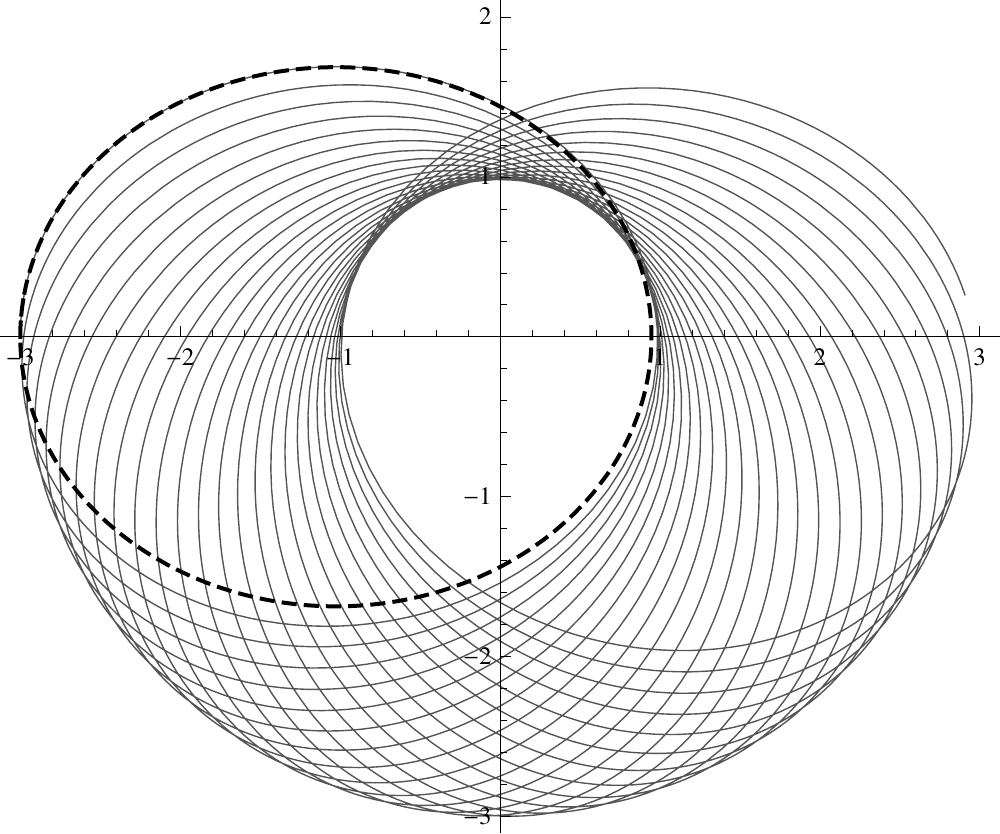}
\end{minipage}
\caption{The St\"ormer-Verlet discretization of the Kepler problem with step size $h=0.5$. In both images the dashed ellipse is the exact solution of the original equation. On the left the discrete solution is shown as well. Shown on the right is a solution of the modified equation, truncated after order 2, over the same time interval. The initial position is $(-3,0)$. The solutions of the original and modified equation have initial velocity $(0,0.4)$. For the discrete system the second point is chosen to be the evaluation at time $h$ of the exact solution of the original equation.}\label{fig-kepler}
\end{figure}
\end{example}

\subsubsection{Fourth order}
We extend the calculations of Section \ref{sssect-sv} to include the $h^4$-terms. We find
\begin{align*}
\cL_\disc([x],h)
&= \frac{1}{2} |\dot{x}|^2 - U + \frac{h^2}{24} \big(-3 U''(\dot{x},\dot{x}) - 3 U'\ddot{x} + \binner{\dot{x}}{x^{(3)}} \big) \\
&\quad + \frac{h^4}{5760} \big( -45 U''(\ddot{x},\ddot{x}) - 90 U^{(3)}(\ddot{x},\dot{x},\dot{x}) - 60 U''(x^{(3)},\dot{x})  \\
&\hspace{2cm} + 5 \big|x^{(3)}\big|^2 - 15 U^{(4)}(\dot{x},\dot{x},\dot{x},\dot{x})- 15  U'x^{(4)} + 3 \binner{\dot{x}}{x^{(5)}} \big) + \O(h^6)
\end{align*}
and
\begin{align*}
\cL_\mesh([x],h) 
&= \frac{1}{2} |\dot{x}|^2 - U + \frac{h^2}{24} \left(-2 U''(\dot{x},\dot{x}) - 2 U'\ddot{x} - |\ddot{x}|^2 \right) \\
&\quad + \frac{h^4}{720} \Big( 3 U''(\ddot{x},\ddot{x}) + 6 U^{(3)}(\ddot{x},\dot{x},\dot{x}) + 4 U''(x^{(3)},\dot{x}) + 2 \big|x^{(3)}\big|^2 \\
&\hspace{4cm}  + U^{(4)}(\dot{x},\dot{x},\dot{x},\dot{x})+ U' x^{(4)} + \binner{\ddot{x}}{x^{(4)}} \Big) + \O(h^6).
\end{align*}
To eliminate higher derivatives of $x$ in the $h^4$-term we can use 
$ \ddot{x} = - U' + \O(h^2) $ as before. To do this in the $h^2$-term, the second order term of the modified equation \eqref{mech-modeqn} is also necessary. We apply it repeatedly until all higher derivatives are eliminated. We find
\begin{align*}
\cL_{\mod,5}(x,\dot{x},h) 
&= \frac{1}{2} |\dot{x}|^2 - U + \frac{h^2}{24} \left( U'U' -2 U''(\dot{x},\dot{x}) \right) \\
&\quad + \frac{h^4}{720} \big( 3 U''(U',U') - 6 U^{(3)}(U',\dot{x},\dot{x}) - 2 U''(U''\dot{x},\dot{x}) + U^{(4)}(\dot{x},\dot{x},\dot{x},\dot{x}) \big) .
\end{align*}

\begin{remark}
The derivatives of $U$ should be considered as covariant, contravariant, or mixed tensors depending on the context. For example:
\begin{align*}
&U^{(3)}(U',\dot{x},\dot{x}) = \sum_{i,j,k} \frac{\partial^3 U}{\partial x_i \partial x_j \partial x_k} \der{U}{x_i} \dot{x}_j \dot{x}_k, \\
& U''(U''\dot{x},\dot{x}) =  \sum_{i,j} \frac{\partial^2 U}{\partial x_i \partial x_j} \left( \sum_k \frac{\partial^2 U}{\partial x_i \partial x_k} \dot{x}_k \right) \dot{x}_j.
\end{align*}
\end{remark}

The fourth truncation of the modified equation is most easily found from the meshed modified Lagrangian. We have
\begin{align*}
\der{\cL_\mesh}{x} - \frac{\d}{\d t} \der{\cL_\mesh}{\dot{x}}
&= - \ddot{x} - U' + \frac{h^2}{12} \left(U''\ddot{x} + U^{(3)}(\dot{x},\dot{x}) \right) \\
&\quad + \frac{h^4}{240} \Big( - 6 U^{(4)}(\ddot{x},\dot{x},\dot{x}) - 3 U^{(3)}(\ddot{x},\ddot{x}) \\
&\hspace{1.7cm} - 4 U^{(3)}(x^{(3)},\dot{x}) - U''x^{(4)} - U^{(5)}(\dot{x},\dot{x},\dot{x},\dot{x}) \Big) + \O(h^6).
\end{align*}
Equating this to zero and solving for $\ddot{x}$ we obtain the modified equation,
\begin{align*}
\ddot{x} &= - U' + \frac{h^2}{12} \left(U^{(3)}(\dot{x},\dot{x}) - U''U' \right) \\
&\hspace{1.2cm} + \frac{h^4}{720} \Big( 20 U^{(3)}(U''\dot{x},\dot{x}) - 8 U''(U''U') + 18 U^{(4)}(U',\dot{x},\dot{x}) \\
&\hspace{4.7cm} - 9 U^{(3)}(U',U') -3 U^{(5)}(\dot{x},\dot{x},\dot{x},\dot{x}) \Big) + \O(h^6).
\end{align*}

\subsection{Comparison with the modified Hamiltonian}

Here we consider the symplectic Euler discretization of a mechanical Lagrangian. Its discrete Lagrangian is given by Example \ref{ex-methods}$(\ref{Lh-SE1})$,
\[ L_\disc(x_j,x_{j+1},h) = \frac{1}{2} \left| \frac{x_{j+1}-x_j}{h} \right|^2 - U(x_{j}). \]
The discrete Euler-Lagrange equation is
\[ \frac{x_{j+1} - 2 x_j + x_{j-1}}{h^2} = - U'(x_j). \]
Since we are dealing with a separable continuous Lagrangian, this is the same difference equation as the one obtained by the St\"ormer-Verlet method. 

For this discretization we have
\[ \cL_\disc([x],h) = \frac{1}{2} |\dot{x}|^2 - U + \frac{h}{2} U' \dot{x} + \frac{h^2}{24} \left( \binner{\dot{x}}{x^{(3)}} - 3 U'\ddot{x} - 3 U''(\dot{x},\dot{x}) \right) + \O(h^3) \]
and
\[ \cL_\mesh([x],h) = \frac{1}{2} |\dot{x}|^2 - U + \frac{h}{2} U' \dot{x} - \frac{h^2}{24} \left( |\ddot{x}|^2 + 2 U'\ddot{x} + 2 U''(\dot{x},\dot{x}) \right) + \O(h^3). \]
The second truncation of the modified Lagrangian is
\begin{equation}\label{se-mod-lag}
\cL_{\mod,2}(x,\dot{x},h) =  \frac{1}{2} |\dot{x}|^2 - U + \frac{h}{2} U' \dot{x} + \frac{h^2}{24} \left( U'U' - 2 U''(\dot{x},\dot{x}) \right).
\end{equation}
Note that the first order term $\frac{h}{2} U' \dot{x} = \frac{h}{2}\frac{\d U}{\d t}$ does not contribute to the Euler-Lagrange equations, hence this Lagrangian is equivalent to the corresponding modified Lagrangian \eqref{SV-modlag} of the St\"ormer-Verlet method. 

We compare this to the symplectic Euler discretization of the Hamiltonian system with Hamiltonian $\cH(x,p) = \frac{1}{2} p^2 + U(x)$. The modified Hamiltonian for this system, truncated after order 2, is
\begin{align}
\cH_{\mod,2}(x,p,h) &=  \cH - \frac{h}{2} \cH_x \cH_p + \frac{h^2}{12} \big( \cH_{pp} (\cH_x,\cH_x) + \cH_{xx} (\cH_p,\cH_p) + 4 \cH_{px}(\cH_x,\cH_p) \big) \notag \\
&= \frac{1}{2}p^2 + U - \frac{h}{2} U' p + \frac{h^2}{12} \big( U'U' + U'' (p,p) \big)  . \label{se-mod-ham}
\end{align}
Its derivation can be found for example in \cite[Example IX.3.4]{hairer2006geometric}. 

Now we take the Legendre transformation of the modified Lagrangian \eqref{se-mod-lag}. We have
\[ p = \der{\cL_{\mod,2}}{\dot{x}} = \dot{x} + \frac{h}{2}U' - \frac{h^2}{6} U'' \dot{x} \]
and hence
\[ \dot{x} = p - \frac{h}{2}U' + \frac{h^2}{6} U'' p  + \O(h^3). \]
The Hamiltonian corresponding to $\cL_{\mod,2}$ is
\[ \big(\!\inner{p}{\dot{x}} - \cL_{\mod,2} \big)\big|_{\dot{x} = p - \frac{h}{2}U' + \frac{h^2}{6} U'' p + \O(h^3)} = \cH_{\mod,2} + \O(h^3). \]
We see that, up to a truncation error, the modified Lagrangian \eqref{se-mod-lag} and the modified Hamiltonian \eqref{se-mod-ham} are obtained from one another by Legendre transformation.

\subsection{A non-separable Lagrangian}

Our approach is not limited to separable Lagrangians. It can be applied whenever the Lagrangian is regular. 

\begin{example}[Anisotropic harmonic oscillator]
Consider a Lagrangian of the form 
\[ \cL(x, \dot{x}) = \frac{1}{2}\inner{ \dot{x} }{ M \dot{x} } + \frac{1}{2}\inner{ x }{ (J_+ + J_-) \dot{x} } + \frac{1}{2}\inner{ x }{ A x }, \]
where the matrices $A$, $J_+$, and $M$ are symmetric, and $J_-$ is antisymmetric. Its Euler-Lagrange equation is
\[ -M \ddot{x} + J_- \dot{x} + A x = 0. \]
We use the discrete Lagrangian from Example \ref{ex-methods}$(\ref{Lh-SE2}$),
\begin{align*}
L_\disc(x_j, x_{j+1},h) &= \cL\!\left( x_{j+1}, \frac{x_{j+1}-x_j}{h} \right) \\
&= \frac{1}{2} \inner{\frac{x_{j+1}-x_j}{h} }{ M \frac{x_{j+1}-x_j}{h}} \\
&\qquad + \frac{1}{2} \inner{ x_{j+1} }{ (J_+ + J_-) \frac{x_{j+1}-x_j}{h} }  + \frac{1}{2} \inner{ x_{j+1} }{ A x_{j+1} }.
\end{align*}
Its discrete Euler-Lagrange equation is
\[ \left( M + \frac{h}{2} J_+ \right) \frac{-x_{j+1} + 2 x_j - x_{j-1}}{h^2} + J_- \frac{ x_{j+1} - x_{j-1} }{ 2h } + Ax_j = 0.\]
Note that this depends on $J_+$, even though the continuous Euler-Lagrange equation does not. We have
\begin{align*}
\cL_\disc([x],h) &= \cL\!\left( x + \frac{h}{2} \dot{x},\  \dot{x} \right) +\O(h^2) \\
&= \cL + \frac{h}{2} \inner{ \der{\cL}{x} }{ \dot{x} } +\O(h^2) \\
&= \frac{1}{2}\inner{ \dot{x} }{ M \dot{x} } + \frac{1}{2}\inner{ x }{ (J_+ + J_-) \dot{x} } + \frac{1}{2}\inner{ x }{ A x } \\
&\hspace{3cm} + \frac{h}{2} \inner{ \frac{1}{2} (J_+ + J_-) \dot{x} + A x}{ \dot{x} } + \O(h^2).
\end{align*}
Up to first order, the meshed modified Lagrangian is equal to $\cL_\disc$,
\begin{align*}
\cL_\mesh([x],h)
&= \frac{1}{2}\inner{ \dot{x} }{ M \dot{x} } + \frac{1}{2}\inner{ x }{ (J_+ + J_-) \dot{x} } + \frac{1}{2}\inner{ x }{ A x } \\
&\hspace{3cm} + \frac{h}{2} \left( \frac{1}{2} \inner{ \dot{x} }{ J_+ \dot{x} } + \inner{ \dot{x} }{ A x } \right) + \O(h^2).
\end{align*}
Since second and higher derivatives of $x$ do not occur in these terms, the classical modified Lagrangian $\cL_{\mod,1}(x,\dot{x},h)$ is obtained by simply truncating $\cL_\mesh([x],h)$ after the first order term. Its Euler-Lagrange equation is
\[ -M \ddot{x} + J_- \dot{x} + A x - \frac{h}{2} J_+ \ddot{x} = 0. \]
Solving for $\ddot{x}$ we find the modified equation
\[ \ddot{x} = M^{-1} (J_- \dot{x} + A x) - \frac{h}{2} M^{-1} J_+ M^{-1} (J_- \dot{x} + A x) + \O(h^2) \]
We see that the first order term of the modified equation depends on $J_+$, even though the original Euler-Lagrange equation does not. This example illustrates how different but equivalent continuous Lagrangians lead to different discretizations and different modified Lagrangians. However, the leading order term of the modified equation is the same for all of them. This term is just the original Euler-Lagrange equation.
\end{example}

\section{Conclusion and outlook}

We addressed the question whether modified equations for variational integrators are Lagrangian. In a strict sense the answer is no: truncations of the modified equations are not Euler-Lagrange equations. However, they can be turned into Euler-Lagrange equations by adding higher-order corrections, or by considering the full formal power series. We developed a method to construct modified Lagrangians, starting from the discrete Lagrangian of the numerical method. This provides a new algorithm to construct the modified equation for variational integrators.

Some goals for future research are extending this method to degenerate Lagrangians, to systems with (nonholonomic) constraints and to Lagrangian PDEs, as well as a rigorous study of the optimal truncation of the power series and possible long-time conservation results.

\bibliographystyle{plainnat}
\bibliography{modeqn}

\end{document}